\newtheorem{thm}{Theorem}[section]
\newtheorem{lem}[thm]{Lemma}
\newtheorem{prop}[thm]{Proposition}
\newtheorem{cor}[thm]{Corollary}
\newtheorem{remark}[thm]{Remark}
\newtheorem*{thm*}{Theorem}
\newtheorem*{conj*}{Conjecture}
\newtheorem*{cor*}{Corollary}
\newtheorem*{ques*}{Question}
\newtheorem*{namedthm}{\namedthmname}
\theoremstyle{definition}
\newtheorem{defn}[thm]{Definition} 
\newtheorem*{rem*}{Remark}
\DeclareMathOperator{\lk}{lk}
\DeclareMathOperator{\st}{st}
 \newcommand{\BN}{{\mathbb {N}}}
 \newcommand{\BR}{{\mathbb {R}}}
\newcommand{\BS}{{\mathbb {S}}}
 \newcommand{\BZ}{{\mathbb {Z}}}
\newcommand{\CC}{{\mathcal {C}}}
\newcommand{\CK}{{\mathcal {K}}}
\newcommand{\CS}{{\mathcal {S}}}
\newcommand{\FB}{{\mathfrak{B}}}
\newcommand{\FX}{{\mathfrak {X}}}
\newcommand{\id}{{{\rm id}}}
\newcommand{\Stein}{{\mathfrak{X}}}
\def\qed{\hfill$\square$\smallskip}
\DeclareMathOperator{\Map}{Map}
\DeclareMathOperator{\Homeo}{Homeo}
\DeclareMathOperator{\Ends}{Ends}
\DeclareMathOperator{\dlk}{{\lk}^{\downarrow}}
\DeclareMathOperator{\dst}{{\st}^{\downarrow}}
\DeclareMathOperator{\Sym}{Sym}
\begin{document}

\title{Block mapping class groups and their finiteness properties}

\author{J. Aramayona}
\address{Javier Aramayona: Instituto de Ciencias Matem\'aticas (ICMAT). Nicol\'as Cabrera, 13--15. 28049, Madrid, Spain}
\email{javier.aramayona@icmat.es}
\thanks{J.A. was supported by grant CEX2019-000904-S  funded by MCIN/AEI/ 10.13039/501100011033 and by grant PGC2018-101179-B-I00.  M.C. was supported by the Spanish grants P20\_01109 (financed by Junta de Andaluc\'ia) and PID2020-117971GB-C21 (financed by Ministerio de Ciencia e Innovación, Spain), the Research Program ``Braids" of ICERM (Providence, RI), and a Ram\'on y Cajal Grant 2021 (financed by Ministerio de Ciencia e Innovación, Spain) . R.S. was partially supported by NSF grant DMS--2005297. X.W. was partially supported by the Start-up Grant  of Fudan University.}

\author{J. Aroca}
\address{Julio Aroca: Instituto de Ciencias Matem\'aticas (ICMAT). Nicol\'as Cabrera, 13--15. 28049, Madrid, Spain}
\email{julio.aroca.93@gmail.com}
\author{M. Cumplido}
\address{Mar\'ia Cumplido: 
Departmento de \'Algebra,
Facultad de Matem\'aticas,
Universidad de Sevilla
Calle Tarfia s/n
41012, Seville, Spain}
\email{cumplido@us.es}
\author{R. Skipper}
\address{Rachel Skipper: Department of Mathemtics, The Ohio State University, 231 W 18th Ave, Columbus, OH 43210, USA}
\email{skipper.rachel.k@gmail.com}
\author{X. Wu}
\address{Xiaolei Wu: Shanghai Center for Mathematical Sciences, Jiangwan Campus, Fudan University, No.2005 Songhu Road,Shanghai, 200438 P.R. China}
\email{xiaoleiwu@fudan.edu.cn}

\subjclass[2010]{}

\date{\today}

\keywords{}

\begin{abstract} 
Given $g \in \mathbb N \cup \{0, \infty\}$, let $\Sigma_g$ denote the closed surface of genus $g$ with a Cantor set removed, if $g<\infty$; or the blooming Cantor tree, when $g= \infty$. We construct a family $\mathfrak B(H)$ of subgroups of $\Map(\Sigma_g)$ whose elements preserve a {\em block decomposition} of $\Sigma_g$, and {\em eventually like act} like an element of $H$, where $H$ is a prescribed subgroup of the mapping class group of the block. The group $\mathfrak B(H)$ surjects onto an appropriate symmetric Thompson group of Farley--Hughes \cite{FH15}; in particular, it answers \cite[Question 5.37]{AV20} positively.

Our main result asserts that $\mathfrak B(H)$ is of type $F_n$ if and only if $H$ is. As a consequence, for every $g\in \mathbb N \cup \{0, \infty\}$ and every $n\ge 1$, we construct a subgroup $G <\Map(\Sigma_g)$ that is of type $F_n$ but not of type $F_{n+1}$, and which contains the mapping class group of every compact surface of genus $\le g$ and with non-empty boundary.
    \end{abstract}

\maketitle

\section{Introduction} 
Asymptotic mapping class groups of surfaces offer a fascinating bridge between mapping class groups of compact surfaces and Higman-Thompson groups \cite{Bri07, Deh06, FK04,FK08, FK09, AF21, GLU20, SW21b,ABF+21}. Originally introduced by Funar--Kapoudjian \cite{FK04,FK09} as discrete versions of the diffeomorphism group of the circle, they are strongly related to other classical families of groups that serve as ``artinizations'' of Higman-Thompson groups, such as the {\em  braided Thompson groups} of Brin \cite{Bri07} and Dehornoy \cite{Deh06}; see \cite{FKS13} for an account on this topic. 

A substantial piece of motivation for  asymptotic mapping class groups is that they are subgroups of mapping class groups of infinite-type surfaces, now commonly known as big mapping class groups. In this direction,  a  recent result of Funar--Neretin \cite{FN18} asserts that, the subgroup of the mapping class group of a closed surface minus a Cantor set whose elements can be smoothly extended to the closed surface coincides with a certain asymptotic mapping class group (the group of {\em half-twists} of \cite{FNg}). Moreover, by a result of Funar and the first-named author, this group is in fact dense in the  mapping class group of the underlying surface. 

Work of many authors shows that asymptotic mapping class groups --as well as some of their Thompson-like relatives-- tend to enjoy strong finiteness properties; more concretely, they are often of type $F_\infty$ \cite{BG84, FH15, FMW+13, BFM+16, GLU20, SW21a, ABF+21}.

In this paper we introduce a more general family of groups, which this time surject onto the {\em symmetric Higman-Thompson groups} of Farley--Hughes \cite{FH15} (more generally the Neretin groups \cite{Ne92}), thus providing an answer to \cite[Question 5.37]{AV20}. Moreover, we will prove that they realize all the possible types of finiteness properties. We now offer an informal definition of our groups sufficient to understand our main result, and postpone all details until Section \ref{section-can-sur}.

Let~$O$ and $Y$ be compact, connected, orientable surfaces, where $Y$ is further assumed to be closed. For each $d\ge 2$, write $Y^d$ for the surface that results from removing $d+1$ open disks from $Y$. 
From this data, we construct a non-compact surface $\CC_d(O,Y)$ by gluing copies of $Y^d$ to $O$ in a tree-like manner; see Figure \ref{fig:Cantorsurf}. Following \cite{ABF+21}, the surface $\CC_d(O,Y)$ is called the {\em Cantor surface} determined by $d$, $O$ and $Y$. Each of standard copies of~$Y^d$ used to construct $\CC_d(O,Y)$ is called a {\em block}. We will be interested in the group $\mathfrak B_d(H)$ whose elements are isotopy classes of homeomorphisms of $\CC_d(O,Y)$ which {\em eventually} send blocks to blocks and, on each block in a connected component, they {\em act like} an element of a fixed subgroup $H$ of the mapping class group of the block; see Section \ref{section-can-sur} for details. In the case when $H$ is trivial, the group $\mathfrak B_d(H)$ is the {\em asymptotic mapping class group} of \cite{ABF+21}. 

As we will see in Proposition \ref{prop:surj-symthom}, there is a  surjective homeomorphism from $\mathfrak B_d(H)$ onto an appropriate {\em symmetric Higman-Thompson group}, which simply records the action of  elements of $\mathfrak B_d(H)$ permute the ends of $\CC_d(O,Y)$. As a consequence, this answers Question 5.37 of \cite{AV20}. 

As we explain next, the group $\mathfrak B_d(H)$ has the same finiteness properties as $H$. Recall that a group $G$ is of \emph{type $F_n$} if there exists an aspherical CW-complex whose fundamental group is $G$ and whose $n$-skeleton is finite; a group is of \emph{type $F_\infty$} if it is of type $F_n$ for all $n\geq 1$. Our main result is: 

\begin{thm}
 Let $O$ be a compact orientable surface, and $Y$ a closed surface of genus $\le 1$. Then $\mathfrak B_d(H)$ is of type $F_n$ if and only if $H$ is. 
\label{thm:main} 
\end{thm}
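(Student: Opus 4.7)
My plan is to attack the two implications separately, with the substance of the argument concentrated in the "if" direction.

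For the direction that $\mathfrak B_d(H)$ of type $F_n$ implies $H$ of type $F_n$, I would exhibit $H$ as a retract of $\mathfrak B_d(H)$. Concretely, fix a distinguished block $B_0$ of $\CC_d(O,Y)$; mapping classes supported in $B_0$ form a subgroup isomorphic to $H$ (or to a finite-index extension thereof). A retraction $\mathfrak B_d(H)\to H$ should be constructed by reading off the ``eventually like'' element of $H$ on a fixed tail of the tree of blocks, which is well-defined because, by definition, elements of $\mathfrak B_d(H)$ stabilize outside a compact piece. Since retracts of groups of type $F_n$ are of type $F_n$, this settles the forward implication.

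For the reverse direction I would follow the now-standard Brown-criterion plus discrete-Morse strategy used in \cite{ABF+21,SW21a} and related papers. The first step is to build a contractible CW-complex $\Stein$ on which $\mathfrak B_d(H)$ acts cellularly. Vertices should be ``admissible partial block decompositions'' of $\CC_d(O,Y)$, with higher simplices recording nested refinements; this is precisely the Stein subspace inside a Farley--Hughes-style complex of rigid structures. Contractibility of $\Stein$ should follow from a Morse collapse onto the initial decomposition, paralleling the proofs in the asymptotic case. Cell stabilizers will be isomorphic, up to finite groups permuting the blocks, to finite direct products $H^k$, and are therefore of type $F_n$ whenever $H$ is. Moreover the action should be cocompact on skeleta of every dimension after filtering by the height function discussed below.

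To feed this into Brown's criterion, I would filter $\Stein$ by cocompact subcomplexes $\Stein_{\le k}$ using a height function such as the number of blocks in the partial decomposition. The heart of the argument is then to show that the connectivity of the descending links $\dlk$ at a vertex of height $k$ tends to infinity with $k$. In our setting $\dlk$ should split, up to contractible factors, as a join of complexes each encoding ``admissible arcs'' on a single copy of $Y^d$, so that high connectivity propagates from the known connectivity of appropriate arc complexes on the compact surface $Y^d$.

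The main obstacle, as in all proofs of this type, will be the descending-link analysis: one has to identify $\dlk$ explicitly as (or as homotopy equivalent to) such a join, and verify that the arc complex on $Y^d$ which appears is sufficiently highly connected. I expect the hypothesis that $Y$ has genus $\le 1$ to enter precisely here, either to tame the $\PMap$-action on boundary components of blocks or to guarantee that the relevant arc complexes have the required connectivity bounds; making this restriction work (and understanding why higher genus is problematic) is the critical technical point to pin down.
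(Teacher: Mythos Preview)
Your high-level architecture for the ``if'' direction matches the paper's: a contractible Stein--Farley cube complex, Brown's criterion, and a discrete Morse analysis of descending links. However, two of your concrete identifications are off in ways that matter.

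First, cell stabilizers are not ``up to finite groups, direct products $H^k$''. In the paper's complex the vertices are equivalence classes $[Z,f]$ with $Z$ a suited subsurface and $f\in\mathfrak B_d(H)$, and the stabilizer of a cube with bottom $[Z,\id]$ is (up to finite index) $\Map_\partial(Z)\ltimes H^m$. The mapping class factor is infinite and essential; it disappears from the finiteness argument only because mapping class groups of compact surfaces are already of type $F_\infty$. Second, the descending link is not a join of arc complexes on $Y^d$. The paper shows instead that $\dlk(x)$ is a \emph{complete join} (in the Hatcher--Wahl sense) over the \emph{piece complex} $\mathcal P(X,A)$ of the suited subsurface $X$---the complex whose simplices are disjoint embedded copies of $Y^d$ with a single essential boundary. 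The required connectivity is then imported from \cite[Theorem 10.1]{ABF+21}; this is where the genus hypothesis on $Y$ is used, not through arc complexes.

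For the ``only if'' direction, your proposed retraction does not work as a group homomorphism: the map $r:\mathfrak B_d(H)\to H$ that reads off the label on a fixed tail satisfies $r(gg')=r(g)$ for $g'\in\mathfrak B_d$ but is not multiplicative in general, so $H$ is only a \emph{quasi}-retract. The paper handles this via Alonso's theorem \cite{Al94}, after first checking separately (by an exhaustion argument) that $H$ is finitely generated. Also, the copy of $H$ inside $\mathfrak B_d(H)$ is not ``mapping classes supported in a fixed block $B_0$''---that would give all of $\Map(B_0)$---but rather the image of $\iota:h\mapsto[O_2,(\id,(h,1,\ldots,1)),O_2]$, which labels one suited boundary by $h$.
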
 

\begin{remark}
A minor variation of our construction produces a slightly larger family $\mathfrak B_{d,r} $ of groups, which surject onto the more general  symmetric Thompson groups $V_{d,r}(G)$; see Remark \ref{rmk:moreroots} for details. Theorem \ref{thm:main} above is still true in this marginally broader setting, by a word-for-word copy of our arguments below. 
\end{remark} 

\smallskip

\noindent{\bf Applications.} We now give an application of the above theorem. Write~$\Sigma_g$, with $0\le g \le \infty$, for the closed surface of genus $g\ge 0$ with a Cantor set removed (for $g<\infty$) or the {\em blooming Cantor tree} (for $g=\infty$); see Section \ref{section-can-sur}. By the classification of infinite-type surfaces \cite{Ke23,Ri63}, we know that $\CC_d(O,Y)$ is homeomorphic to $\Sigma_g$, for some $g$; see Remark \ref{rem:homeotype}.

When $Y$ is the sphere, $\Map(Y^d)$ contains the pure braid group with $d$ strings. By a theorem of Zaremsky \cite{Za17}, $\Map(Y^d)$ contains a subgroup of type $F_n$ but not of type $F_{n+1}$ for $d\geq n+3$. By choosing an appropriate subsurface, the same is true when $Y$ is a torus. Selecting these subgroups as local group $H$, we obtain: 

\begin{cor}
For every $n \ge 1$ and every $g\ge \mathbb N \cup \{\infty\}$, $\Map(\Sigma_g)$ contains a subgroup $G$ of type $F_n$ but not $F_{n+1}$ such that $G$ contains the mapping class group of every compact connected surface of genus $\le g$ with non empty boundary.  
\end{cor}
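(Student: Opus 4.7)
The plan is to apply Theorem~\ref{thm:main} with a carefully chosen triple $(O,Y,d)$ and local subgroup $H$: the block group $\mathfrak B_d(H)$ will then automatically contain every desired compact mapping class group via extension by the identity.

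To begin, I fix data $(O,Y,d)$ so that $\CC_d(O,Y)\cong\Sigma_g$. For $g<\infty$, I take $Y$ to be the sphere and $O$ to be a compact orientable surface of genus $g$ with one boundary component; gluing copies of $Y^d$ in the tree-like pattern of Section~\ref{section-can-sur} onto $O$ produces a surface of genus $g$ with a Cantor set of ends, that is, $\Sigma_g$ (cf.~Remark~\ref{rem:homeotype}). For $g=\infty$, I take $Y$ to be a torus and $O$ a disk; now each block contributes genus, and the resulting Cantor surface is the blooming Cantor tree. Next, fix $d\ge n+3$. If $Y$ is the sphere, then $\Map(Y^d)$ contains the pure braid group on $d$ strands, and by Zaremsky's theorem~\cite{Za17} this pure braid group contains a subgroup $H$ of type $F_n$ but not of type $F_{n+1}$. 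If instead $Y$ is a torus, the same $H$ can be located inside a planar subsurface $\Sigma\subset Y^d$ whose mapping class group rel boundary contains the required pure braid group, via the natural inclusion $\Map(\Sigma,\partial\Sigma)\hookrightarrow \Map(Y^d)$. Since $Y$ has genus $\le 1$ in both cases, Theorem~\ref{thm:main} applies and shows that $G := \mathfrak B_d(H)\le\Map(\Sigma_g)$ is of type $F_n$ but not of type $F_{n+1}$.

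It remains to verify that $G$ contains $\Map(S,\partial S)$ for every compact connected orientable surface $S$ of genus $h\le g$ with nonempty boundary. Such an $S$ admits a locally flat embedding into $\Sigma_g\cong\CC_d(O,Y)$ as a compact subsurface, since $\Sigma_g$ has at least genus $g\ge h$ and infinitely many ends. Extension by the identity yields an injective homomorphism $\Map(S,\partial S)\hookrightarrow \Map(\Sigma_g)$; any such extension is the identity outside a compact subsurface, so it trivially sends blocks to blocks and acts as $\id\in H$ on every exterior block. Hence its image lies in $G$, and the proof is complete. The only point requiring some care is the uniform choice of $(O,Y,d)$ and $H$ across the cases $g=0$, $g\in\mathbb N$, and $g=\infty$; once $H$ is placed inside the appropriate $\Map(Y^d)$, everything else follows formally from Theorem~\ref{thm:main} together with the block-decomposition construction.
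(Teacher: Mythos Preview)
Your argument is correct and mirrors the paper's own approach: choose $(O,Y,d)$ to realize $\Sigma_g$ as a Cantor surface, take $H<\Map_\partial(Y^d,b_0)$ to be a Zaremsky subgroup of type $F_n$ but not $F_{n+1}$ (sitting inside a pure braid group in a planar subsurface of $Y^d$), apply Theorem~\ref{thm:main}, and observe that compactly supported mapping classes lie in $\mathfrak B_d(H)$ since they are eventually labeled by $1\in H$. One cosmetic slip: by Remark~\ref{rem:homeotype} you want $O$ closed (genus $g$ for finite $g$, a sphere for $g=\infty$), not with a boundary component, in order to land on $\Sigma_g$ rather than $\Sigma_g^1$.
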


We remark that the above corollary also follows from the combination of the main result of \cite{ABF+21} with the {\em diagonal labeling} of our construction; see Remark \ref{rem:diag-bmap}. As both families of groups contain the asymptotic mapping class groups, they are dense inside $\Map(\Sigma_g)$  when $g<\infty$ \cite[Theorem~3.19]{SW21b}.

\medskip

\noindent{\bf Acknowledgements.} We would like to thank Louis Funar for conversations. We are grateful to Chris Leininger for an argument that fixes an error in Proposition 6.6 in a previous version; see Remark \ref{rem:chris}.

\section{Preliminaries}\label{section-pre}
In this section we provide the background material needed in our arguments. We refer the reader to \cite{FM12} and \cite{AV20}, respectively, for details on mapping class groups of finite and infinite type surfaces.

\subsection{Surfaces} Throughout, all surfaces will be connected, orientable, and second countable. If a surface $S$ has non empty boundary, we will further assume that $\partial S$ has finitely many connected components. We say that $S$ has {\em finite type} if $\pi_1(S)$ is finitely generated; otherwise it has  {\em infinite type}. An {\em end} of $S$ is an element of \[\varprojlim \pi_0( S\setminus K),\] where the inverse limit is taken over all compact sets $K \subset S$, directed with respect to inclusion. We say that an end is {\em planar} if it has a neighborhood of genus zero; otherwise it is {\em non-planar}. Denote the set of ends by  $\Ends(S)$ and its subset of non-planar ends by $\Ends_{np}(S)$.

The set of ends $\Ends(S)$ becomes a topological space by endowing it with the limit topology arising from the discrete topology on each of the terms defining the inverse limit above. Equipped with this topology, $\Ends(S)$ is homeomorphic to a closed subspace of the Cantor set, and $\Ends_{np}(S)$ is a closed subset of $\Ends(S)$.  By work of  Ker\'ekj\'art\'o \cite{Ke23} and Richards \cite{Ri63}, two surfaces are homeomorphic if and only if they have the same genus and number of boundary components, and there is a homeomorphism between their space of ends that restricts to a homeomorphism between the subspaces of non-planar ends; see \cite{Ri63} for a precise statement. 

For $g \in \mathbb N \cup \{0\}$, we denote by $\Sigma_g$ the connected orientable surface without boundary, of genus $g$ and with a Cantor space of ends, all of them planar; we remark that $\Sigma_0$ is usually referred to as the {\em Cantor tree surface}. Similarly, let $\Sigma_\infty$ be the {\em blooming Cantor tree surface}, namely the connected orientable  surface with empty boundary, infinite genus, and whose space of ends is a Cantor set and every end is non-planar. Finally, for $0\le g\le \infty$ and $b\ge 1$ write $\Sigma_g^b$ for the surface that results by removing $b$ open discs from $\Sigma_g$. 

\subsection{Curves} By a {\em curve} on $S$ we mean the isotopy class of a simple closed curve on $S$. We say that a curve on $S$ is {\em essential} if it does not bound a disc or an annulus whose other boundary component is a boundary component of $S$.

\subsection{Mapping class group} Given a surface $S$, write $\Homeo^+(S,\partial S)$ for the group of orientation preserving self-homeomorphisms of $S$ that restrict to the identity on $\partial S$. The {\em mapping class group} of $S$ is 
\[
\Map(S) := \pi_0\left(\Homeo^+(S,\partial S)\right).
\]
We will often blur the distinction between a homeomorphism and its isotopy class; however, if we want to make this distinction explicit, we will use $[f]$ to denote the isotopy class of the homeomorphism $f$. 

The mapping class group $\Map(S)$ is countable if and only if $S$ has finite type. When $S$ has infinite type, we endow $\Map(S)$ with the quotient topology of the compact-open topology on $\Homeo^+(S,\partial S)$; with respect to this topology,  $\Map(S)$ is a zero-dimensional, non-locally compact group; see \cite{AV20}.

\medskip

We will also need a version of the mapping class group that allows boundary components to be permuted. To this end, for each boundary circle $a \subset \partial S$ we fix a homeomorphism $p_a:a \to \mathbb S^1$, which we call a {\em parametrization} of $a$. We say that an orientation preserving homeomorphism $f:S \to S$ {\em respects the boundary parametrization} if $p_{f(a)} \circ f_{|a} \equiv p_a$ for all $a\in \pi_0(\partial S)$. We now set: 

\begin{defn}[Boundary-permuting mapping class group] 
The {\em boundary permuting mapping class group} $\Map_\partial(S)$ is the group of isotopy classes of orientation-preserving self-homeomorphisms of $S$ that respect the boundary parametrization, modulo isotopies that fix $\partial S$ pointwise. 
\label{def:spherepermuting}
\end{defn}

\begin{remark}\label{rem:finindex}
Since we are assuming that surfaces have a finite number of boundary components, $\Map(S)$ is a (normal) finite-index subgroup of $\Map_\partial(S)$; moreover, the two groups coincide if and only if $S$ has at most one boundary component. 
\end{remark}

Finally, given a surface $S$ and $A\subseteq \pi_0(\partial S)$, we define $\Map_\partial(S,A)$ to be the subgroup of $\Map_\partial(S)$ whose elements restrict to the identity on every element of $A$. Of course, if $A=\partial S$, then $\Map_\partial(S,A)= \Map(S)$.

\section{Cantor surfaces and block mapping class groups}\label{section-can-sur}

We now recall the definition of a {\em Cantor surface} from \cite{ABF+21}. Let~$O$ be an arbitrary compact orientable surface, and $Y$ homeomorphic to a sphere or a torus. Denote by~$Y^d$ the surface obtained from~$Y$ by removing $d+1$~open disks. Write $b_0, \ldots, b_d$ for the boundary components of $Y^d$; we will refer to~$b_0$ as the {\em top} boundary component of $Y^d$. For each $i= 1, \ldots, d$, we choose an orientation-reversing homeomorphism $\mu_i: b_0 \to b_i$. We inductively define a family~$\{O_n\}$ of compact surfaces  as follows: 
\begin{enumerate} 
\item $O_1$ is the surface that results by removing an open disk from $O$. 

\item $O_2$ is the surface obtained by gluing (with respect to some pre-fixed orientation-reversing homeomorphism) a copy of $Y^d$, along its top boundary component, to the boundary of~$O_1$ created in the previous step; 

\item For each $n\ge 2$,  $O_n$ is the surface obtained by gluing a copy of~$Y^d$, along its top boundary component, to each of the boundary components of $O_{n-1} \setminus O_{n-2}$, using the homeomorphisms $\mu_i$ above.  
\end{enumerate}

The {\em Cantor surface} $\CC_d(O,Y)$ is the union of the surfaces $O_k$ above. The closure of each of the connected components of $O_n \setminus O_{n-1}$ is called a {\em block} of $\CC_d(O,Y)$; by construction, every block is homeomorphic to $Y^d$. The {\em top boundary} $\partial_T B$ of a block $B$ is the image of the top boundary of $Y^d$ under the gluing map.

\begin{figure}[ht]
\centering
\begingroup%
  \makeatletter%
  \providecommand\color[2][]{%
    \errmessage{(Inkscape) Color is used for the text in Inkscape, but the package 'color.sty' is not loaded}%
    \renewcommand\color[2][]{}%
  }%
  \providecommand\transparent[1]{%
    \errmessage{(Inkscape) Transparency is used (non-zero) for the text in Inkscape, but the package 'transparent.sty' is not loaded}%
    \renewcommand\transparent[1]{}%
  }%
  \providecommand\rotatebox[2]{#2}%
  \newcommand*\fsize{\dimexpr\f@size pt\relax}%
  \newcommand*\lineheight[1]{\fontsize{\fsize}{#1\fsize}\selectfont}%
  \ifx\svgwidth\undefined%
    \setlength{\unitlength}{396.6414372bp}%
    \ifx\svgscale\undefined%
      \relax%
    \else%
      \setlength{\unitlength}{\unitlength * \real{\svgscale}}%
    \fi%
  \else%
    \setlength{\unitlength}{\svgwidth}%
  \fi%
  \global\let\svgwidth\undefined%
  \global\let\svgscale\undefined%
  \makeatother%
  \begin{picture}(1,0.50975212)%
    \lineheight{1}%
    \setlength\tabcolsep{0pt}%
    \put(0,0){\includegraphics[width=\unitlength,page=1]{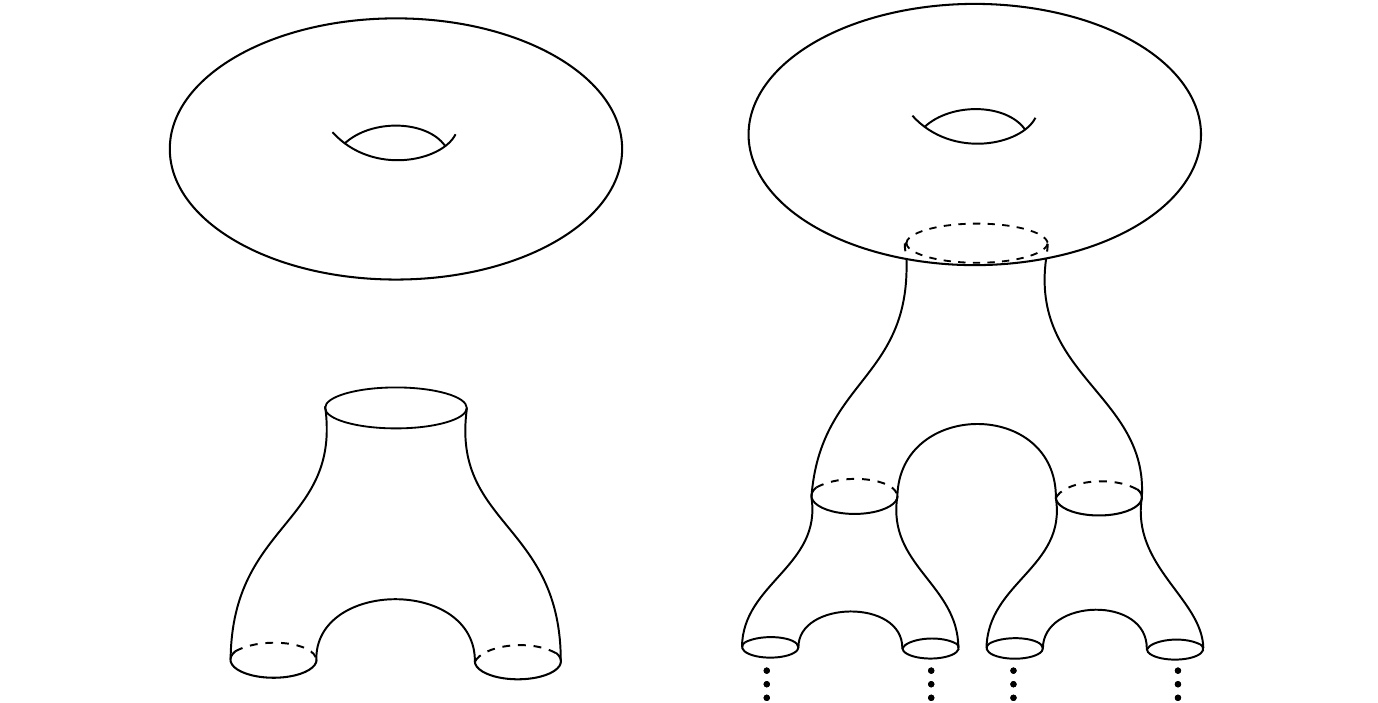}}%
    \put(0.14427667,0.29790618){\makebox(0,0)[lt]{\lineheight{1.25}\smash{\begin{tabular}[t]{l}$O$\end{tabular}}}}%
    \put(0.37127335,0.19152493){\makebox(0,0)[lt]{\lineheight{1.25}\smash{\begin{tabular}[t]{l}$Y^2$\end{tabular}}}}%
    \put(0.77431951,0.29601974){\makebox(0,0)[lt]{\lineheight{1.25}\smash{\begin{tabular}[t]{l}$C_2(O,Y)$\end{tabular}}}}%
  \end{picture}%
\endgroup%

\caption{Example of a Cantor surface}
\label{fig:Cantorsurf}
\end{figure}

\begin{remark}\label{rmk:moreroots} 
Following \cite{SW21b} and \cite{ABF+21}, one may slightly generalize the above construction by first removing $r\ge 1$ open discs from $O$ and then gluing copies of $Y^d$ to each of these boundary components. Our main result has a direct translation to the case of the surfaces $\CC_{d,r}(O,Y)$ obtained in this way; however, we have decided to restrict our attention to the case $r=1$ in order to minimize the notation and level of technicality. 
\end{remark}

A {\em suited subsurface} of $\CC_d(O,Y)$ is a connected subsurface that is the union of $O_1$ and finitely many blocks; in particular, a suited subsurface is always compact. A {\em suited curve} of $\CC_d(O,Y)$ is a boundary component of a suited subsurface that is not one of the original boundary components of $O$.  Given a suited subsurface $Z$, we refer to the boundary curves of $Z$ that are suited curves of~$\CC_d(O,Y)$ as the {\em suited boundary components} of $Z$; the rest (if any) are called the {\em primary boundary components} of $Z$. Given a suited boundary component $a \subset \partial Z$, we say that a connected component $X \subset \CC_d(O,Y)\setminus Z$ is {\em adjacent to} $a$ if the intersection of the closures of $Z$ and $X$ is equal to $a$.

\begin{remark}
The space of ends of $\CC_d(O,Y)$ is always homeomorphic to a Cantor set, and every two ends of $\CC_d(O,Y)$ are related by a homeomorphism of $\CC_d(O,Y)$. In particular, by the classification of non-compact surfaces of Ker\'ekj\'art\'o-Richards \cite{Ri63}, one has: 
\begin{itemize}
    \item If $Y\cong \BS^2$, then $\CC_d(O,Y)\cong \Sigma_g^b$, where $g$ and $b$ are, respectively,  the genus and number of boundary components of $O$. 
    \item If $Y\cong \BS^1 \times \BS^1$, then $\CC_d(O,Y)\cong \Sigma_\infty^b$, where $b$ is the number of boundary components of $O$. 
\end{itemize}
\label{rem:homeotype} 
\end{remark}

For the sake of notational convenience, for the rest of the section we will denote $\CC_d(O,Y)$ simply by~$\CC_d$, unless we need to make explicit reference to the surfaces~$O$ and~$Y$.

\begin{defn}[Block homeomorphism]
Let $\CC_d$ be a Cantor surface. We say that a homeomorphism $f: \CC_d \to \CC_d$ is {\em block} if there exists a suited subsurface $Z \subset \CC_d$ such that: 
\begin{enumerate}
    \item $f(Z)$ is also suited;
    \item The restriction $f: \CC_d \setminus Z \to \CC_d\setminus f(Z)$ sends blocks to blocks. 
\end{enumerate}
\end{defn}
The suited subsurface~$Z$ in the definition above is called a {\em defining surface} for~$f$. Note that  any surface containing a defining surface for~$f$ is itself a defining surface for~$f$. As an immediate consequence, the composition of two block homeomorphisms is also block; in particular, the set of block homeomorphisms of $\CC_d$ is in fact a group under composition.

Next, for each block $B \subset \CC_d$ we fix, once and for all, a homeomorphism $\iota_B:B \to Y^d$ that takes the top boundary component of $B$, denoted $\partial_T B$, to $b_0$, namely the top boundary component of $Y^d$. The homeomorphism $\iota_B$ induces an isomorphism (abusing notation) \[\iota_B: \Map_\partial(B,\partial_T B) \to \Map_\partial(Y^d, b_0),\] and  we will use $\iota_B$ to identify $\Map_\partial(B,\partial_T B)$ with  $\Map_\partial(Y^d, b_0)$ without further mention. Note that $\iota_B$ induces an order on the remaining boundaries of $B$ from that of $Y^d$.

\begin{defn}[$H$-block homeomorphism]
Let $H<\Map_\partial(Y^d, b_0)$, and  $f:\CC_d \to \CC_d$ be a block homeomorphism with defining surface $Z$. We say that $f$ is $H$-{\em block} if for every suited boundary component $a \subset \partial Z$ there is an element $h_a\in H$ such that, for every block $B$ in the connected component of $\CC_d\setminus Z$ adjacent to $a$, we have that 
\[[\iota_{f(B)}\circ f\circ \iota_B^{-1}] \equiv h_a.\] We will say that the boundary curve $a$ is {\em labeled by the element} $h_a$,  that $f$ is \emph{labeled by the tuple} $\mathbf h= (h_1, \ldots, h_n)$ outside of $Z$ (where $n$ is the number of suited boundary components of $Z$), and that $f$ {\em eventually acts like} $\mathbf h$.
\end{defn}

\begin{defn}[$H$-block mapping class group]
Let $\CC_d$ be a Cantor surface, and $H<\Map_\partial(Y^d,b_0)$. The {\em block mapping class group with local group}  $H$, denoted $\mathfrak B_d(H)$, is the group of isotopy classes of $H$-block homeomorphisms of $\CC_d$.
\end{defn}

In the case when $H=\{1\}$, we will simply write $\mathfrak B_d=\mathfrak B_d(H)$. We remark that this case corresponds to the asymptotic mapping class groups of \cite{ABF+21} which,  as mentioned in the introduction,  are in turn an abstraction of the ones considered by Funar, Kapoudjian and the first named author \cite{FK04, FK09, AF21}.

\begin{remark}
The group $\mathfrak B_d(H)$ is always countable, as $\Map(Y,b_0)$ is countable and so is the set of suited subsurfaces of $\CC_d(O,Y)$. In stark contrast, the group of isotopy classes of block homeomorphisms of $\CC_d(O,Y)$ is uncountable.
\end{remark}

\begin{remark}
Considering the surfaces $\CC_{d,r}(O,Y)$ as in Remark \ref{rmk:moreroots} would give rise to more general block mapping class groups $\mathfrak B_{d,r}(O,Y)$. As will become apparent, the obvious adaptation of our arguments below would yield the analog of Theorem \ref{thm:main} for this (a priori, larger) set of groups. 
\label{rem:morerootsgroups} 
\end{remark}

\subsection{An alternate description} 
We now give a useful alternate representation of elements of $\mathfrak B_d(H)$, reminiscent of the {\em tree-pair} representation of elements of (symmetric) Higman-Thompson groups; see Section \ref{sec:HigmanThompson}. 

Let $f \in \mathfrak B_d(H)$, and  consider a defining subsurface $Z \subset \CC_d$ for $f$, with suited boundary components $a_i$, $i=1, \ldots, n$. Write $X_i$ for the connected component of $\CC_d \setminus Z$ adjacent to $a_i$, and denote by $f_Z$ (resp. $h_i$) the restriction of $f$ to $Z$ (resp. the element of $H$ that $f$ acts like on $X_i$). With this notation, $f$ is determined by the tuple 
\[
\left(Z, (f_Z,\mathbf h),f(Z)\right),
\] 
where $\mathbf h= (h_1, \ldots, h_n)$. Finally, observe that, since defining surfaces are not unique, there are countably many different such tuples representing the same element of $\mathfrak B_d(H)$. We use $[Z, (f_Z,\mathbf h),f(Z)]$ to denote the equivalence class (with respect to the obvious equivalence relation) containing $\left(Z, (f_Z,\mathbf h),f(Z)\right)$.

\begin{remark}\label{rem:diag-bmap}
If one takes the labeling to be the same for all components, i.e. $\mathbf h = (h,\cdots,h)$ for some $h\in H$, then the set of all such elements forms a subgroup $\mathfrak{DB}_d(H)$. Since  the surjection $\mathfrak {DB}_d(H) \to H$ mapping  $[Z, (f_Z,\mathbf h),f(Z)]$ to $h$ splits, we have an  isomorphism $$\mathfrak{DB}_d(H) \cong \mathfrak B_d(\{1\})\rtimes H.$$ Now $\mathfrak B_d(\{1\})$ is of type $F_\infty$ \cite{ABF+21}, hence  $\mathfrak B_d(\{1\})\rtimes H$ if of type $F_n$ if and only if $H$ is \cite[Section 7.2]{Ge08}.
\end{remark}

\section{The relation with symmetric Higman-Thompson groups }
\label{sec:HigmanThompson}
In this section we will relate our groups to the so-called {\em symmetric Higman-Thompson groups}, introduced by Farley and Hughes in \cite{FH15}. We commence by reviewing the construction of symmetric Higman-Thompson groups; see \cite{FH15} for a detailed treatment. 

Let $\mathcal T_d$ be the infinite rooted tree of degree $d\ge 2$, where every vertex other than the root has degree~$d+1$. We endow~$\mathcal T_d$ with the path-metric that arises by deeming each edge to have length one; with respect to this path-metric, the end space of~$\mathcal T_d$ is naturally homeomorphic with the set of geodesics emanating from the root, which in turn is  homeomorphic to the standard Cantor set~$\mathfrak{C}$.

Given a vertex $v \in \mathcal T_{d}$ at distance $k\ge 0$ from the root, we say that the vertex~$w$ is a {\em descendant} of~$v$ if $d(v,w)=1$ and~$w$ is at distance~$k+1$ from the root. A  subtree~$F$ of $\mathcal T_{d}$ is {\em complete} if, for every vertex $v\in F$, whenever~$F$ contains a descendant of~$v$,  it contains all of its descendants. Observe that if $F$ is a complete finite rooted subtree of $\mathcal T_d$ then, for every $c\in \mathfrak C$ there is a unique leaf~$l$ of~$F$ such that the geodesic in~$\mathcal T_d$ representing~$c$ contains~$l$; we say that $l$ is the  leaf of $F$ {\em nearest to} ~$c$.

\subsection{Higman-Thompson groups} Let $F_-$ and $F_+$ be complete finite rooted subtrees of $\mathcal T_{d}$, with the same number of leaves, and $b$ a bijection between their sets of leaves. Then the triple $(F_-, b, F_+)$ extends to a homeomorphism $g: \mathfrak C \to \mathfrak C$ by identifying (in the obvious way) the infinite subtree of $\mathcal{T}_{d}$ below the leaf $l$ with that below the leaf $b(l)$. We say that the triple $(F_-, b, F+)$ is a {\em tree-pair} representative of $g$; of course, there are infinitely many tree-pair representatives of any given homeomorphism. 

A pleasant exercise shows that if two self-homeomorphisms of $\mathfrak{C}$ admit a tree-pair representative, then the same is true about their composition. Thus, we define the {\em Higman-Thompson group} $V_{d}$ \cite{Hi74} as the group of homeomorphisms of $\mathfrak{C}$ that have a tree-pair representative; we remark that~$V_{2}$ is the classical Thompson group $V$. A well-known fact \cite{Br87}  is that $V_d$ is of type~$F_\infty$.

\subsection{Symmetric Thompson groups} Next, we recall the definition of the {\em symmetric} versions of the Higman-Thompson groups, introduced by Farley and Hughes \cite{FH15}.  

Enumerate the descendants of every vertex with $\{0,\dots, d-1\}$. Since the (Cantor) space of ends of $\mathcal{T}_{d}$ can be identify with the set of geodesic of $\mathcal{T}_{d}$, we can also identify it with the set of infinite sequences $(t_1, t_2, t_3,\ldots)$, where  $0\le t_i< d$ for every $i\ge 1$. 

Let $F_-$ and~$F_+$ be complete finite subtrees of~$\mathcal T_{d}$, with the same number of leaves, and~$b$ a bijection between their sets of leaves. Write~$l_i$, $i=1, \ldots, n$ for the leaves of~$F_-$.

Let~$G$ be a subgroup of the symmetric group~$\Sym(d)$ on $d$~elements. With respect to the above notation, for every leaf $l_i$ we chose $g_i\in G$, and write $\mathbf g = \{g_1, \ldots, g_n\}$. 
Then the tuple 
\[
(F_-, (b,\mathbf g), F_+)
\] 
induces a homeomorphism $\psi:\mathfrak{C}\to \mathfrak{C}$ as follows. Let $c\in \mathfrak C$ be given in terms of an infinite sequence $(c_1,c_2,\dots)$, and consider the leaf~$l_i$ of~$F_-$ nearest to~$c$, so that $(c_1,c_2,\dots, c_m)$ represents the leaf~$l_i$. Then the leaf~$b(l_i)$ is represented by a finite sequence~$b(l_i)$ (using a slight abuse of notation), and define
\[
\psi(c) = (b(l_i), (g_i(c_{m+1}),g_i(c_{m+2}),\dots).
\]
 We say that $\psi$ {\em eventually acts like} $\mathbf g$, and the tuple $(F_-, (b,\mathbf g), F_+)$ is called the {\em symmetric tree-pair representative} of~$\psi$; as before, there are infinitely many symmetric tree-pair representatives of the same homeomorphism of~$\mathfrak{C}$. 

Again, if two self-homeomorphisms of $\mathfrak{C}$ admit a tree-pair representative, then the same is true about their composition. 
Hence we define the {\em symmetric Higman-Thompson group} $V_d(G)$ {\em with local group} $G$ as the group of homeomorphisms of $\mathfrak{C}$ that admit a $G$ tree-pair representative. 
Observe that if $G=\{1\}$ then $V_{d}(G)= V_{d}$.

Finally, we note that the groups $V_{d}(G)$ are examples of {\em FSS groups} in the sense of Farley and Hughes, which are all of type $F_\infty$ \cite{FH15}. 

\begin{remark}
The groups $V_d(G)$ are part of a larger family of symmetric Higman-Thompson groups $V_{d,r}(G)$ groups, where one allows $r$-forests of $d$-valent rooted trees instead of just $d$-valent rooted trees. These groups are also FFS groups in the sense of Farley and Hughes \cite{FH15}; as such, they are of type $F_\infty$.
\label{rem:thompson}
\end{remark}

\subsection{A surjective homomorphism} 
Recall from Section \ref{section-can-sur} that the Cantor surface $\CC_d(O,Y)$ is the union of compact surfaces $O_k$, with $k\ge 1$. 

We construct an abstract simplicial graph $\Gamma$ as follows: the vertices of $\Gamma$ are the blocks of $\CC_d(O,Y)$ plus $O_1$, and two vertices are adjacent in $\Gamma$ if and only if the corresponding (suited) subsurfaces of $\CC_d(O,Y)$ share one suited boundary component. Observe that, by construction, $\Gamma$ is an infinite rooted $d$-ary tree, with the root corresponding to $O_1$; in particular, $\Gamma$ is isomorphic to $\mathcal T_d$. Fixing one such isomorphism, there is an obvious map
\[\tau:\CC_d(O,Y) \to \mathcal T_{d}\] such that the preimage of the root is $O_1$, and the preimage of the midpoint of every edge of $\mathcal T_{d}$ is a suited curve. Next, there is a natural surjective homomorphism 
\[
q:\Map(Y^d, b_0) \to \Sym(d)
\]
which records the manner in which the elements of $\Map(Y^d, b_0)$ permute the boundary components of $Y^d$.

Let $H < \Map(Y^d, b_0)$ and write $\bar H$ for the $q$-image of $H$. Let $f\in \mathfrak B_d(H)$, and choose a tree-pair representative $(Z,(f_Z, \mathbf h), f(Z))$ for $f$. Then $f$ induces a bijection $b_f$ between the leaves of $\tau(Z)$ and $\tau(f(Z))$; also,  on the connected component $X_i$ of $\CC_d(O,Y) \setminus Z$, the homeomorphism $f$ induces, via $\tau$, an isomorphism $f^*_{X_i}:\tau(X_i) \to \tau(f(X_i))$ that acts like $q(h_i)$. As such, the tuple 
\[
\left(\tau(X),(b_f,q(\textbf h)),\tau(f(X)\right)
\]
is an element of $V_{d}(\bar H)$. An exercise in the definitions of  tree-pair representatives shows that the map 

\[
(Z,(f_Z, \mathbf h), f(Z)) \mapsto \left(\tau(X),(b_f,q(\textbf h)),\tau(f(X)\right),
\]
descends to a map 
\[
\pi: \mathfrak B_d(H) \to V_d(\bar H),
\]
which is in addition a homomorphism. Moreover, one has: 
\begin{prop}\label{prop:surj-symthom}
The  homomorphism $\pi: \mathfrak B_d(H) \to V_{d}(\bar H)$ is surjective.
\end{prop}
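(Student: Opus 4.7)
\Proof The plan is to exhibit, for every $\psi\in V_d(\bar H)$, an explicit preimage in $\mathfrak B_d(H)$. I begin by fixing a symmetric tree-pair representative $(F_-,(b,\mathbf g),F_+)$ of $\psi$, with $\mathbf g=(g_1,\ldots,g_n)$ prescribing a label in $\bar H$ at each leaf $l_i$ of $F_-$. Using that $\bar H = q(H)$, for each $i$ I choose a lift $h_i\in H$ with $q(h_i)=g_i$. Under the identification of $\mathcal T_d$ with the tree $\Gamma$ of blocks, the complete finite subtrees $F_\pm$ correspond to suited subsurfaces $Z_\pm\subset \CC_d(O,Y)$, and each leaf $l_i$ of $F_-$ (resp.\ $b(l_i)$ of $F_+$) picks out a distinguished block $B_{l_i}$ in $Z_-$ (resp.\ $B_{b(l_i)}$ in $Z_+$) whose non-top boundary components are precisely the suited boundary components of $Z_\pm$ grouped by that leaf.

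My next step is to build the block homeomorphism $f$ in two pieces. Inside $Z_-$, I choose some $f_Z\in \Map_\partial$ carrying $Z_-$ to $Z_+$ such that, for each leaf $l_i$, the restriction $f_Z|_{B_{l_i}}\colon B_{l_i}\to B_{b(l_i)}$ realises the permutation $g_i$ on non-top boundary components (read off via $\iota_{B_{l_i}}$ and $\iota_{B_{b(l_i)}}$). Such an $f_Z$ can be assembled block-by-block: on each leaf block, the surjectivity of $q\colon \Map_\partial(Y^d,b_0)\to\Sym(d)$ produces the required permutation, while on the internal blocks the pairing $b$ dictates the matching. Outside $Z_-$, I extend $f$ to each component coming from a leaf $l_i$ by declaring $f|_B := \iota_{f(B)}^{-1}\circ h_i\circ \iota_B$ on every block $B$ in that component, where the image block $f(B)$ is determined inductively by the tree structure together with the permutation $g_i=q(h_i)$.

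To close the argument, I verify that the two pieces glue into a well-defined homeomorphism belonging to $\mathfrak B_d(H)$, and that $\pi(f)=\psi$. Continuity along each suited boundary of $Z_-$ is automatic: $f_Z$ respects the boundary parametrization, while each $h_i$ fixes the top boundary $b_0$ of $Y^d$ pointwise (up to parametrization), so both sides restrict to the canonical parametrization identification. By construction $Z_-$ is a defining surface for $f$ whose labelling on the suited boundaries emanating from $l_i$ is $h_i$, whence $f\in\mathfrak B_d(H)$. Unpacking the definition of $\pi$ one finally finds $\pi(f)=(F_-,(b,(q(h_1),\ldots,q(h_n))),F_+)=(F_-,(b,\mathbf g),F_+)=\psi$.

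The step I expect to be the main obstacle is the construction of $f_Z$ satisfying all the combinatorial constraints simultaneously: lying in $\Map_\partial$, pairing leaf blocks according to $b$, and inducing the prescribed permutation $g_i$ at each $B_{l_i}$. This is the one point where genuine surface topology enters the proof, and it relies on the flexibility of the boundary-permuting mapping class group together with an inductive block-by-block assembly compatible with the parametrizations; all remaining steps are formal bookkeeping against the definitions of $\mathfrak B_d(H)$ and $V_d(\bar H)$.
\qed
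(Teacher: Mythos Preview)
Your proof is correct and follows the same strategy as the paper: pick a tree-pair representative, lift each $g_i\in\bar H$ to $h_i\in H$, build a homeomorphism on a suited subsurface realizing the required boundary bijection, and extend by $h_i$ outside. The one difference is in the choice of $Z_-$: you take the suited subsurface corresponding to all of $F_-$ (so the leaves of $F_-$ correspond to \emph{leaf blocks} $B_{l_i}\subset Z_-$), whereas the paper takes the smaller $Z_-=\tau^{-1}(F_-')$ with $F_-'$ obtained by deleting the leaves and adjacent open half-edges, so that the leaves of $F_-$ correspond directly to the suited boundary components of $Z_-$.

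This difference is exactly what creates your ``main obstacle''. With your larger $Z_-$, the map $f_Z$ must simultaneously realize $b$ on leaf blocks and the permutation $g_i$ on the non-top boundaries of each $B_{l_i}$. With the paper's smaller $Z_-$, the requirement on $\hat f\colon Z_-\to Z_+$ is only that it induce the bijection $b$ on suited boundaries---the permutation $g_i$ is then captured automatically by how $h_i$ (with $q(h_i)=g_i$) acts on the first block outside $Z_-$. So the step you flagged as delicate simply disappears in the paper's version; everything else is identical.
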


\begin{proof}
Consider a tree-pair representative $(F_-, (b,\mathbf g), F_+)$ of an element of the group~$V_{d}(\bar H)$, and denote by~$F_-'$ (resp $F'_+$) the result of removing from~$F_-$ (resp. $F_+$) their leaves and the open half edges incident on these leaves. We know that $Z_-:=\tau^{-1}(F'_-)$ and $Z:=\tau^{-1}(F'_+)$ are suited subsurfaces of $\mathcal{C}_d(O,Y)$. 
Let $\hat f:Z_- \to Z_+$ be a homeomorphism such that the induced bijection between the sets of boundary components of $Z_-$ and $Z_+$ coincides with $b$. Writing $X_1, \ldots, X_n$ for the connected components of $\CC_d(O,Y)$, for each $i$ we choose an element $h_i \in H$ with $q(h_i)=g_i$. Then, the element $f\in \mathfrak B_d(H)$ with tree-pair representative  
\[
(Z_-, (\hat f, \mathbf h), Z_+)
\]
maps to $(F_-, (b,\mathbf g), F_+)\in V_{d}(\bar H)$ under $\pi$, as desired. 
\end{proof}

\begin{remark}
After Remarks \ref{rem:morerootsgroups} and \ref{rem:thompson}, a direct adaptation of the arguments above yield the existence of a surjective homomorphism 
\[
\mathfrak B_{d,r}(H) \to V_{d,r}(\bar H),
\]
where $ V_{d,r}(\bar H)$ is the {\em symmetric Higman-Thompson group with $r$ roots.}
\end{remark}

\begin{remark}
Although it is not very relevant to our discussion, it is worth mentioning that the group of block homeomorphisms of $\mathcal C_d(O,Y)$ surjects onto the {\em Neretin group} of \cite{Ne92}. 
\label{rmk:Neretin}
\end{remark}

\section{Finiteness properties}\label{section-fin}

\subsection{The complex}
In this section, we will construct an infinite-dimensional contractible cube complex $\Stein_{d}(O,Y,H)$ on which $\mathfrak B_d(H)$ acts cellularly, closely related to the classical {\em Stein--Farley} complexes for Higman--Thompson groups \cite{Ste92,Far03}, and very much inspired by the complex constructed in \cite{GLU20, SW21a,ABF+21}. 

Here the surfaces $O$ and $Y$ and the integer $d$ will play virtually no role in the discussion, and so we will simply write $\CC=\CC_{d}(O,Y)$  and $\FB=\FB_{d}(H)$. We remark that the definitions and results in this section are direct translations of those in \cite[Section 5]{ABF+21}; we include short proofs for completeness.

\subsection{The Stein complex}
Consider all ordered pairs $(Z, f)$ where $Z$ is a suited subsurface of $\CC$ and $f\in \FB$. We will say a pair $(Z_1, f_1)$ is equivalent to a pair $(Z_2, f_2)$, written $(Z_1, f_1)\sim (Z_2, f_2)$, if and only if there are representing homeomorphisms $f_1$ and $f_2$ (abusing notation) such that $f_2^{-1}\circ f_1$ maps $Z_1$ to $Z_2$ where $Z_1$ is a defining surface for $f_2^{-1}\circ f_1$ as an $H$-block homeomorphism. A straightforward calculation confirms that this relation is symmetric, reflexive, and transitive, i.e. an equivalence relation.

We will denote by $[Z,f]$ the equivalence class containing $(Z,f)$ with respect to this relation, and write $\CS$ for the set of all equivalence classes. Importantly, the group $\FB$ acts on $\CS$ via left multiplication as follows:
\[g\cdot [Z, f]=[Z, g\circ f].\]

Recall that by definition, a suited subsurface is the union of $O_1$ and finitely many additional blocks. We define the \emph{complexity} $h(Z, f)$ of $(Z,f)$ to be the number of blocks in $Z$. We observe that if $(Z_1, f_1)\sim (Z_2, f_2)$ then $h(Z_1,f_1)=h(Z_2, f_2)$ and thus $h$ gives a well-defined function $h: \CS\rightarrow \BN$.

\begin{defn}\label{defn:complexity}
The \emph{complexity} of $[Z,f]\in \CS$ is $h(Z,f)$ for some, and hence any, representative of $[Z,f]$.
\end{defn}

 Let $x_1$ and $x_2$ be two classes in $\CS$ and define $x_1\leq x_2$ if and only if $x_1=[Z_1, f]$ and $x_2=[Z_2, f]$ for suited subsurfaces with $Z_1 \subseteq Z_2$. When the inclusion is proper, we write $x_1 < x_2$. The following is a word-for-word adaptation of Lemmas 5.2 and 5.3 of \cite{ABF+21}: 

\begin{lem}
$(\CS, \leq)$ is a directed poset. Moreover, if $x<y \in \CS$ then $h(x)<h(y)$. In particular, its geometric realization $|\CS|$ is contractible
\end{lem}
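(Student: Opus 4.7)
The plan is to verify in turn that $\leq$ is a well-defined partial order, that the complexity $h$ strictly increases along strict inequalities, that every pair of elements admits a common upper bound, and finally that contractibility follows from directedness by a standard nerve argument. For well-definedness of $\leq$ on equivalence classes, note that if $(Z_1,f),(Z_2,f)$ and $(Z_1',f'),(Z_2',f')$ are two common-representative presentations of the pair $(x_1,x_2)$, the equivalence relation forces $g=(f')^{-1}\circ f\in\FB$ to have both $Z_1$ and $Z_2$ as defining surfaces with $g(Z_i)=Z_i'$; bijectivity of $g$ then yields $Z_1\subseteq Z_2\iff Z_1'\subseteq Z_2'$. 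Reflexivity is immediate, and strict monotonicity of $h$ is clear since proper inclusion of suited subsurfaces strictly increases the block count; antisymmetry follows at once from strict monotonicity. For transitivity, given $x_1\leq x_2\leq x_3$ with common-representative presentations $(A_1,a),(A_2,a)$ and $(B_2,b),(B_3,b)$ and equivalence $(A_2,a)\sim(B_2,b)$, set $h:=b^{-1}\circ a$ and check that $A_3:=h^{-1}(B_3)$ is a suited defining surface for $h$: since $B_3\setminus B_2$ is a union of blocks outside $h(A_2)$, their $h^{-1}$-images are blocks outside $A_2$, so $A_3=A_2\cup h^{-1}(B_3\setminus B_2)$ is connected, contains $O_1$, and is a union of $O_1$ with blocks. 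This gives $(A_3,a)\sim(B_3,b)$, and combined with $A_1\subseteq A_2\subseteq A_3$ yields $x_1\leq x_3$.

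For directedness, given $x_1=[Z_1,f_1]$ and $x_2=[Z_2,f_2]$, set $h:=f_2^{-1}\circ f_1\in\FB$ and fix any defining surface $W_0$ for $h$. The key enlargement principle is that adjoining blocks of $\CC\setminus W_0$ to $W_0$ yields a larger suited defining surface for $h$ whose image under $h$ is again suited (the new blocks map to blocks outside the suited $h(W_0)$, and connectedness is preserved by $h$). Applying this, $W:=W_0\cup Z_1$ is a defining surface containing $Z_1$, the union $V:=h(W)\cup Z_2$ is suited, and (by the same principle applied to $h^{-1}$) $Z_3:=h^{-1}(V)\supseteq W\supseteq Z_1$ is a suited defining surface for $h$ with $h(Z_3)=V$. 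Hence $(Z_3,f_1)\sim(V,f_2)$, and since $Z_1\subseteq Z_3$ and $Z_2\subseteq V$, the class $[Z_3,f_1]=[V,f_2]$ is a common upper bound of $x_1$ and $x_2$.

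Contractibility of $|\CS|$ then follows from the standard fact that the nerve of a directed poset is contractible: any finite subcomplex $K\subset|\CS|$ has only finitely many vertices, which by directedness admit a common upper bound $y\in\CS$; then the join $K*\{y\}$ is a subcomplex of $|\CS|$ (as $y$ lies above every vertex of any chain in $K$) and is a cone, hence contractible, so $K$ is null-homotopic in $|\CS|$. Therefore $\pi_n(|\CS|)=0$ for all $n$, and $|\CS|$ being a CW-complex is contractible. The main technical obstacle is the book-keeping with the \emph{suited} versus \emph{defining-surface} conditions under $H$-block homeomorphisms — specifically, that enlarging a suited set or taking preimages preserves both, which ultimately reduces to the fact that an $H$-block homeomorphism sends whole blocks to whole blocks outside any defining surface.
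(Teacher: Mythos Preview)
Your proof is correct and follows essentially the same approach as the paper, which defers to the corresponding argument in \cite{ABF+21}; you have simply spelled out the details of that standard argument. The transitivity and directedness arguments via enlarging defining surfaces, together with the cone argument for contractibility of the order complex of a directed poset, are exactly what is expected. One minor remark: since $\leq$ is defined existentially (via the existence of a common-$f$ representative), well-definedness on equivalence classes is automatic, so your first paragraph's verification is unnecessary, though not incorrect.
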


Following the same strategy of \cite{ABF+21}, we now consider a second, finer relation $\preceq$ on $\CS$. Let $x_1, x_2$ be two vertices in $\CS$. We will say $x_1\preceq x_2$ if $x_1=[Z, f]$ and $x_2=[Z\cup X, f]$ where $X$ is a disjoint union of blocks. As usual, if $x_1\preceq x_2$ and $x_1\neq x_2$, we will write $x_1 \prec x_2$. This finer order is no longer a partial ordering as it is not transitive. Nevertheless, it is true that if $x_1\preceq x_3$ and $x_1\leq x_2 \leq x_3$ then $x_1\preceq x_2 \preceq x_3$. Thus we can define a simplex $x_0<x_1<\cdots<x_k$ in $|\CS|$ to be \emph{elementary} if $x_0\preceq x_k$.

\begin{defn}[Stein complex] 
The \emph{Stein complex} $\FX=\FX(O,Y,H)$ is the full subcomplex of $|\CS|$ consisting of elementary simplices. 
\end{defn}

We note that the Stein complex $\FX$ is more tractable than $|\CS|$ but still has many of the properties we care about. For instance, the action of $\FB$ on $\CS$ preserves the relation $\preceq$, so thus $\FB$ has a simplicial action on $\FX$. Moreover, a classical result of Quillen \cite{Qui78} implies the following, whose proof is a verbatim copy of Lemmas 5.6 and 5.7 of \cite{ABF+21}: 

\begin{prop}
The Stein complex $\FX$ is contractible.
\end{prop}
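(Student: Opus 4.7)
The plan is to apply Quillen's Theorem~A to the inclusion $i\colon \FX \hookrightarrow |\CS|$ and, combined with the contractibility of $|\CS|$ from the previous lemma, conclude that $\FX$ is contractible. The core task is therefore to show that, for every $y \in \CS$, the subcomplex $\FX_{\leq y} \subseteq \FX$ spanned by the vertices $x \leq y$ is contractible. The argument should be formally parallel to \cite[Lemma~5.7]{ABF+21}.

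First, I would analyse the local combinatorics of the $\preceq$-downset of a fixed $y = [Z, f]$. The suited subsurface $Z$ carries a canonical finite rooted tree structure, whose root is $O_1$ and whose remaining nodes are the blocks of $Z$; suited subsurfaces $Z' \subseteq Z$ correspond to rooted subtrees of this tree. A direct analysis of the definition of $\preceq$ shows that $[Z', f] \preceq y$ if and only if $Z'$ contains every non-leaf block of $Z$: indeed, if some non-leaf block $B$ of $Z$ were missing from $Z'$, then a child of $B$ in $Z$ would either break the suitedness of $Z'$ or violate the requirement that the missing blocks be attached directly to $\partial Z'$. Writing $L$ for the set of leaf blocks of $Z$ and setting $Z_{\mathrm{core}} := O_1 \cup (\text{non-leaf blocks of } Z)$, the $\preceq$-downset of $y$ is therefore isomorphic, as a poset under $\leq$, to the Boolean lattice $2^L$. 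In particular every chain in this downset is automatically elementary, so the subcomplex $\FX_{\preceq y} \subseteq \FX_{\leq y}$ spanned by these vertices is the order complex of a lattice with both a minimum and a maximum element, hence contractible.

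Next, I would define a simplicial retraction $r \colon \FX_{\leq y} \to \FX_{\preceq y}$ on vertices by
$$r([Z_x, f]) := [Z_x \cup Z_{\mathrm{core}}, f].$$
The map $r$ is order-preserving (because $Z_x \subseteq Z_{x'}$ forces $Z_x \cup Z_{\mathrm{core}} \subseteq Z_{x'} \cup Z_{\mathrm{core}}$), is the identity on $\FX_{\preceq y}$, and satisfies $x \leq r(x)$ for all $x \in \FX_{\leq y}$. Moreover, an elementary chain $x_0 < \cdots < x_k$ maps to a chain $r(x_0) \leq \cdots \leq r(x_k)$ lying in $\FX_{\preceq y}$, where all chains are elementary. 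The standard poset-theoretic homotopy argument applied to the inequality $x \leq r(x)$ then promotes $r$ to a deformation retract, so $\FX_{\leq y}$ is contractible.

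With $\FX_{\leq y}$ contractible for every $y \in \CS$, Quillen's Theorem~A applied to $i$ yields that the inclusion is a homotopy equivalence, and the contractibility of $|\CS|$ from the previous lemma finishes the proof. The main technical obstacle I anticipate is verifying that $r$ is well-defined on equivalence classes: one must fix a representative $(Z, f)$ of $y$ and then show, for every $x \leq y$, that a representative $(Z_x, f)$ with $Z_x \subseteq Z$ and the \emph{same} $f$ can always be chosen, so that $Z_x \cup Z_{\mathrm{core}}$ is unambiguous. This alignment of representatives requires unpacking the equivalence relation $\sim$ on pairs $(Z, f)$ and using that any surface containing a defining surface is itself defining; this bookkeeping is exactly the content of \cite[Lemma~5.7]{ABF+21}, whose argument transfers verbatim to our setting.
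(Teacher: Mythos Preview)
There is a genuine gap in the deformation--retraction step. Your map $r$ is simplicial (an elementary chain does map to an elementary chain in $\FX_{\preceq y}$), but the ``standard poset homotopy'' from $x \le r(x)$ is only valid on an \emph{order complex}. The Stein complex $\FX_{\leq y}$ is not one: its simplices are the elementary chains, and $\preceq$ is not transitive. Concretely, the straight-line homotopy needs, for every elementary simplex $x_0 < \cdots < x_k$, the mixed chains $x_0 < \cdots < x_i \le r(x_i) \le \cdots \le r(x_k)$ to lie in $\FX_{\leq y}$, i.e.\ to satisfy $x_0 \preceq r(x_k)$. This fails already when $Z$ is a chain $O_1\cup B_1\cup B_2\cup B_3$: take $x_0=[O_1,f]$ and $x_1=[O_1\cup B_1,f]$ (so $x_0\preceq x_1$), and compute $r(x_1)=[O_1\cup B_1\cup B_2,f]$. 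Then $x_0\preceq r(x_1)$ would require $B_1\cup B_2$ to be a disjoint union of blocks, which it is not since $B_1$ and $B_2$ share a boundary circle. Hence the prism over the edge $\{x_0,x_1\}$ does not land in $\FX_{\leq y}$, and your homotopy breaks. (For the same reason, $x \preceq r(x)$ fails in general, so one cannot even retract edge by edge.)

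The argument in \cite{ABF+21} that the paper is quoting runs differently and sidesteps this issue: one first shows that for every pair $x<y$ with $x\not\prec y$ the \emph{open interval} $|(x,y)|$ inside the full order complex $|\CS|$ is contractible, via the closure operator $z\mapsto z_0:=\sup\{w: x\preceq w\le z\}$; here the poset homotopy $z\ge z_0\le y_0$ is legitimate because $|(x,y)|$ \emph{is} an order complex. One then builds $|\CS|$ up from $\FX$ by gluing in the non-elementary closed intervals $|[x,y]|$ in order of increasing $h(y)-h(x)$; each is attached along a contractible subcomplex (a suspension of $|(x,y)|$), so the homotopy type is unchanged. Your reduction to $\FX_{\le y}$ is not wrong in spirit, but proving $\FX_{\le y}$ contractible already requires this same interval argument, so it does not shortcut anything.
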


\subsection{The Stein-Farley cube complex}
In this section, we  group the simplices of $\FX$ together to form a cube complex. Let $x$ and $y$ be vertices of~$\FX$ with $x\le y$, and set $[x,y]= \{z \mid x\le z\le y\}$. The same argument of \cite[Lemma 5.8]{ABF+21} shows that $[x,y]$ has the structure of a finite Boolean lattice; in particular, its geometric realization is (the barycentric subdivision of) a cube of dimension $h(y) - h(x)$. Moreover, every two cubes intersect over a face, by the same argument as in \cite[Lemma 5.10]{ABF+21}. In particular, we have (see \cite[Lemma~5.11]{ABF+21}):

\begin{prop}
The complex $\FX$ has the structure of a cube complex with each cube defined by an interval $[x,y]$ with $x\preceq y$.
\end{prop}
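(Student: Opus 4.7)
The plan is to bundle the elementary simplices of $\FX$ according to the intervals $[x,y]$ with $x \preceq y$, and then verify the resulting collection satisfies the axioms of a cube complex. Everything rests on the structural lemmas already cited from \cite{ABF+21}, which we take over verbatim.

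First, I would invoke the Boolean lattice fact (the analogue of \cite[Lemma 5.8]{ABF+21}): whenever $x \preceq y$ in $\CS$, the poset $[x,y] = \{z \mid x \le z \le y\}$ is isomorphic to the Boolean lattice on $h(y)-h(x)$ generators, because going up from $x$ to $y$ amounts to choosing which of the finitely many new blocks of $y \setminus x$ to adjoin. Its order complex is therefore the barycentric subdivision of an $(h(y)-h(x))$-dimensional cube $Q_{[x,y]}$. Declaring each such $Q_{[x,y]}$ to be a (closed) cube of $\FX$ gives a candidate cubical structure, and by construction every elementary simplex $x_0 < x_1 < \cdots < x_k$ lies in $Q_{[x_0, x_k]}$, so these cubes cover $\FX$.

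Next I would check that faces of a cube $Q_{[x,y]}$ are themselves cubes of the proposed structure: the faces of the Boolean lattice $[x,y]$ are exactly the sub-intervals $[x',y']$ with $x \le x' \le y' \le y$ and $x' \preceq y'$ (obtained by fixing which new blocks of $y\setminus x$ are present and which are absent), and each such sub-interval determines a cube $Q_{[x',y']}$ of the correct dimension. Then I would verify that two cubes $Q_{[x_1,y_1]}$ and $Q_{[x_2,y_2]}$ intersect along a common face: the intersection of their underlying posets is $[\max(x_1,x_2),\, \min(y_1,y_2)]$ provided this interval is non-empty and of type $\preceq$, which is exactly the content of the cube-intersection argument in \cite[Lemma~5.10]{ABF+21}. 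This gives the required face-to-face intersection axiom.

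Finally, I would note that distinct cubes have distinct interiors, which follows from the fact that the pair $(x_0, x_k)$ of minimum and maximum vertices of a simplex in the interior of $Q_{[x,y]}$ must equal $(x,y)$; so there is no over-counting and $\FX$ is indeed a cube complex in which the cubes are precisely the subcomplexes $Q_{[x,y]}$ with $x \preceq y$. The main obstacle is really bookkeeping: one must be careful that, even though $\preceq$ is not transitive, the intervals $[x,y]$ with $x \preceq y$ are nonetheless closed under taking sub-intervals of the Boolean type and under the intersection operation above — but this was already the content of the analogous lemmas in \cite{ABF+21}, and the arguments transfer without change to our setting since the only features used are that a suited surface is the union of $O_1$ with finitely many blocks and that $\preceq$ records adjoining or deleting disjoint unions of blocks.
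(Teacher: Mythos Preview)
Your proposal is correct and follows exactly the route the paper takes: the paper simply cites the analogues of \cite[Lemmas~5.8, 5.10, 5.11]{ABF+21} for the Boolean-lattice structure of $[x,y]$, the face-to-face intersection property, and the resulting cube-complex structure, noting that the arguments carry over verbatim. You have spelled out the bookkeeping in more detail, but the underlying argument is the same.
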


\begin{defn}[Stein-Farley complex] The complex $\FX$ equipped with the above cubical structure is called the \emph{Stein-Farley cube complex} associated to the Cantor surface $\CC$ and the subgroup $H<\Map_\partial(Y^d, b_0)$.
\end{defn}

\section{Brown's Criterion and Discrete Morse Theory on the Stein-Farley cube complex}
As was the case in \cite{ABF+21}, now that we have an action of $\FB$ on the Stein-Farley complex, we will want to apply Brown's criterion to deduce the finiteness properties \cite{Br87}. First, recall that a filtration $\{K_j\}_{j\geq 1}$ of a space $K$ is called \emph{essentially $n$-connected} if for every $i\geq 1$, there exists an $i'\geq i$ such that $\pi_\ell(K_i\rightarrow K_{i'})$ is trivial for all $\ell \leq n$.

We remark that the definitions and results in this Section are direct translations of those in \cite[Section 6]{ABF+21}; we include short proofs for completeness.

\begin{thm}[Brown's Criterion]
Let $n$ be in $\BN$ and assume that a group~$G$ acts on an $(n-1)$-connected CW-complex $\CK$. Assume that the stabilizer of every $k$-cell of $\CK$ is of type $F_{n-k}$. Let $\{\CK_j\}_{j \geq 1}$ be a filtration of $\CK$ such that each $\CK_j$ is finite modulo the action of $G$. Then $G$ is of type $F_n$ if and only if $\{\CK_j\}_j$ is essentially $(n-1)$-connected.
\end{thm}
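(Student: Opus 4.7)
I would prove Brown's criterion via the equivariant Borel construction. Recall the standard characterization: $G$ is of type $F_n$ if and only if there exists a contractible free $G$-CW complex with $n$-skeleton finite modulo $G$. Because $\CK$ is $(n-1)$-connected, the projection $EG \times_G \CK \to BG$ is an $n$-equivalence, so the Borel construction records all the relevant finiteness data through dimension $n$, and the task becomes translating the hypothesis on stabilizers and on the filtration into finiteness statements about a model for this Borel construction.

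\textbf{Building the model.} The first step would be to construct a $G$-CW model $\CE$ for $EG \times_G \CK$ whose cells are indexed by pairs $(\sigma,\tau)$, where $\sigma$ is a $G$-orbit representative of a $k$-cell of $\CK$ and $\tau$ is a cell of a chosen $K(G_\sigma,1)$ model with finite $(n-k)$-skeleton (available by the hypothesis that each $k$-cell stabilizer is of type $F_{n-k}$). The filtration $\{\CK_j\}$ would induce a filtration $\{\CE_j\}$ on $\CE$, and since each $\CK_j$ is finite modulo $G$, each $\CE_j^{(n)}$ would be finite modulo $G$.

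\textbf{The two directions.} For the implication $(\Leftarrow)$, I would observe that essential $(n-1)$-connectedness of $\{\CK_j\}$ carries over to $\{\CE_j\}$ through dimension $n$; after passing to a cofinal subsequence along which each inclusion $\CE_i \hookrightarrow \CE_{i'}$ kills $\pi_\ell$ for $\ell \le n-1$, one can equivariantly attach cells of dimension $\le n$ to annihilate the residual homotopy classes, yielding a model of $EG$ with finite $n$-skeleton modulo $G$, hence $G$ is of type $F_n$. For $(\Rightarrow)$, starting from a $BG$ with finite $n$-skeleton, I would lift its $n$-skeleton into $EG \times_G \CK$; its image is compact, hence lies in some $\CE_j$, and a standard obstruction-theoretic argument together with the observation that every finite subcomplex of $\CE$ lies in some $\CE_j$ forces essential $(n-1)$-connectedness of $\{\CE_j\}$, which then descends to $\{\CK_j\}$.

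\textbf{Main obstacle.} The hardest part will be the coordinated bookkeeping between stabilizer finiteness and filtration level: a $k$-cell stabilizer of type $F_{n-k}$ contributes cells of dimensions $k, k+1, \ldots, n$ to the Borel construction, so across the filtration these finiteness data must be combined without disturbing the finiteness or $G$-equivariance of earlier stages. Concretely, several iterated passages to cofinal subsequences (one for stabilizer resolutions, another for filtration connectivity) will be required, and verifying that the equivariant cells attached at each stage to kill higher homotopy remain finitely many modulo $G$ is where the argument is most delicate.
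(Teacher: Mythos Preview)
The paper does not prove this statement: Brown's Criterion is quoted from \cite{Br87} as a black box and then applied to the action of $\mathfrak B_d(H)$ on the Stein--Farley complex. There is therefore no ``paper's own proof'' to compare against.

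That said, your outline is the standard route to Brown's criterion and is essentially what one finds in Brown's original paper and in later expositions (e.g.\ Geoghegan's book): pass to the Borel construction $EG\times_G\CK$, use the stabilizer hypothesis to build it with controlled skeleta over each orbit of cells, and then translate essential $(n-1)$-connectedness of the filtration back and forth with the existence of a $K(G,1)$ having finite $n$-skeleton. Your identification of the main bookkeeping issue---matching the $F_{n-k}$ data on $k$-cell stabilizers against the filtration level so that only finitely many $G$-orbits of cells are attached at each stage---is exactly the point where the argument requires care, and your plan to handle it by iterated passage to cofinal subsequences is correct. One small caveat: in the $(\Rightarrow)$ direction you should be explicit that the $n$-equivalence $EG\times_G\CK\to BG$ (coming from $(n-1)$-connectedness of $\CK$) is what lets you transport the finite $n$-skeleton of $BG$ into the Borel construction up to homotopy; as written, ``lift its $n$-skeleton'' is slightly imprecise since there is no literal section, only a homotopy inverse through dimension $n$.
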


First, observe that the complexity function given in Definition~\ref{defn:complexity} gives a function $h:\FX^{(0)}\rightarrow \BN$ which naturally extends affinely to each cube to obtain a \emph{height function}
\[h:\FX\rightarrow \BR,\]
which is $\FB$-equivariant with respect to the trivial action of $\FB$ on $\BR.$ Each cube has a unique vertex at  which $h$ is maximized (respectively minimized) which we call the the \emph{top} (respectively \emph{bottom}) of the cube. Note that this agrees with our definition of top and bottom above. Now, for any integer $k$, we define $\FX^{\leq k}$ to be the subcomplex of $\FX$ spanned by vertices with height at most $k$. An obvious adaptation of \cite[Lemma 6.2]{ABF+21} yields: 

\begin{lem}
The group $\FB$ acts cocompactly on $\FX^{\leq k}$ for all $1\leq k < \infty$.
\end{lem}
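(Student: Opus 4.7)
The plan is to show that $\FX^{\leq k}$ has only finitely many $\FB$-orbits of cells. Recall that every cube of $\FX$ is the geometric realization of an elementary interval $[x,y]$ with $x = [Z,f]$ and $y = [Z\cup X, f]$, where $X$ is a disjoint union of blocks attached to suited boundary components of $Z$. Left multiplication by $f^{-1}$ takes such a cube to one based at $[W,\id]$, so every orbit of cubes admits a representative whose vertices are all of the form $[W,\id]$ with $Z\subseteq W \subseteq Z\cup X$. It therefore suffices to show that, modulo the natural $\FB$-action, there are only finitely many pairs $(Z,X)$ of the above form with $h(Z\cup X)\leq k$.

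To count these, I would pass to the combinatorial projection $\tau:\CC \to \mathcal T_d$ from Section \ref{sec:HigmanThompson}. Each pair $(Z,X)$ gives rise to a finite rooted subtree $T = \tau(Z\cup X)\subset \mathcal T_d$, together with a partition of its non-root vertices into ``$Z$-vertices'' and ``$X$-vertices''. The claim is that $(Z_1,X_1)$ and $(Z_2,X_2)$ lie in the same $\FB$-orbit if and only if the associated partitioned rooted trees are isomorphic as abstract objects. The ``only if'' direction is immediate, since any element of $\FB$ descends through $\tau$ to an automorphism of $\mathcal T_d$ that respects the combinatorial structure.

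For the converse, I would realize a given abstract isomorphism by hand: start from the identity on $O_1$; extend block-by-block across $Z_1\cup X_1$, sending the top boundary of each block to the top boundary of its image block; and finally extend across $\CC\setminus(Z_1\cup X_1)$ by copying the residual trees of blocks block-by-block onto their matching residual trees on the image side. At every suited boundary component of $Z_1\cup X_1$ one assigns the trivial label $1_H$, which guarantees that the resulting homeomorphism is $H$-block and therefore represents an element of $\FB$. This is essentially the construction used in Proposition \ref{prop:surj-symthom} to lift elements of $V_d(\bar H)$ back to $\FB$.

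Finiteness of orbits is then a counting remark: there are only finitely many isomorphism types of finite rooted subtrees of $\mathcal T_d$ with at most $k$ non-root vertices, and each such tree admits only finitely many partitions of its vertex set. The main obstacle is the realization step in the previous paragraph, namely confirming that any abstract combinatorial isomorphism of partitioned trees can be promoted to an honest $H$-block homeomorphism of $\CC$; once this is in hand, cocompactness follows from the finite-type counting above.
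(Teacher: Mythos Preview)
Your strategy is sound and does prove cocompactness, but the biconditional you assert is only half true. The ``only if'' direction fails, and your justification for it rests on a misconception: elements of $\FB$ do \emph{not} descend through $\tau$ to automorphisms of $\mathcal T_d$. They descend only to elements of the symmetric Thompson group $V_d(\bar H)$ (Section~\ref{sec:HigmanThompson}), and these are merely almost-automorphisms---an element of $\FB$ may act arbitrarily inside its defining surface and in particular need not respect the block decomposition there. Concretely, for $d=2$ take $Z_1$ to be $O_1$ together with three blocks so that $\tau(Z_1)$ is the balanced rooted tree of height two, and $Z_2$ to be $O_1$ with three blocks arranged so that $\tau(Z_2)$ is a path. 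These trees are non-isomorphic, yet $Z_1$ and $Z_2$ are homeomorphic compact surfaces with the same number of suited boundary components, and any homeomorphism $Z_1\to Z_2$ respecting boundary parametrizations extends with trivial labels to some $g\in\FB$ having $Z_1$ as defining surface, so that $g\cdot[Z_1,\id]=[Z_2,\id]$.

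This does not damage your conclusion: only the ``if'' direction is needed to bound the number of orbits by the (finite) number of partitioned trees, and your realization construction for that direction is essentially correct. One point to tighten: to obtain $g\cdot[Z_1,\id]=[Z_2,\id]$ you need $Z_1$, not just $Z_1\cup X_1$, to be a defining surface for $g$, so the labels on the $X_1$-blocks must match the trivial labels you assign beyond them; this is arranged by using the canonical identifications $\iota_{g(B)}^{-1}\circ\iota_B$ on those blocks. In fact the counterexample above points to a sharper invariant: the $\FB$-orbit of a cube is already determined by the pair $(h(Z),\lvert A\rvert)$, where $A$ is the set of suited boundaries of $Z$ carrying an $X$-block, so the full partitioned-tree data is finer than necessary. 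The paper itself does not spell out a proof here, deferring to the analogous \cite[Lemma~6.2]{ABF+21}; your argument, once the ``only if'' claim is dropped, is in the same spirit.
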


Next, we prove:

\begin{lem}\label{lem:stabofcells}
Let $C$ be a cube in $\FX$ with bottom vertex given by $x=[Z, g]$. Then the stabilizer of $C$ in $\FB_d(H)$ is isomorphic to a finite index subgroup of $\Map_\partial(Z) \ltimes H^m$ where $m$ is the number of suited boundary components of~$Z$.
\end{lem}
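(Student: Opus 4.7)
The plan is to realize $\Sta(C)$ as a semidirect product $H^m \rtimes K$, where $K$ is an explicit finite-index subgroup of $\Map_\partial(Z)$. First, by translating via $g^{-1}$, which conjugates $\Sta(C)$ into a stabilizer of the same shape, I may reduce to the case $x = [Z, \id]$ and $y = [Z \cup X, \id]$, where $X = B_1 \sqcup \cdots \sqcup B_k$ is the disjoint union of the $k$ blocks attached to $k$ of the $m$ suited boundary components of $Z$. Since the height function $h$ is $\FB_d(H)$-invariant and attains its minimum and maximum on the cube uniquely at $x$ and $y$, any element stabilizing $C$ must fix both $x$ and $y$. Unwinding the equivalence relation $\sim$, this is equivalent to $f$ admitting an isotopy representative $\tilde f$ that preserves $Z$, $Z\cup X$, and hence $X$, setwise, and that acts like an element of $H$ on each connected component of $\CC \setminus Z$.

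The central tool is the restriction homomorphism
\[
\Phi: \Sta(C) \to \Map_\partial(Z), \qquad f \mapsto [\tilde f|_Z],
\]
which I would verify is well-defined via an isotopy-extension argument: two representatives of $f$ preserving $Z$ differ by an ambient isotopy, which after adjustment restricts to an isotopy of $Z$ compatible with the boundary parametrizations coming from the block structure. Its image is the finite-index subgroup $K \le \Map_\partial(Z)$ consisting of mapping classes whose induced permutation of the $m$ suited boundary components preserves the partition into ``$X$-facing'' and ``free'', and which restrict to the identity on any boundary components of $Z$ inherited from $\partial O$ (these are fixed pointwise by every element of $\FB_d(H)$). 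Its kernel consists of elements acting trivially on $Z$: by the $H$-block condition each is uniquely determined by an $m$-tuple $(h_1,\ldots,h_m) \in H^m$ recording its action on the $m$ components of $\CC\setminus Z$, giving $\ker\Phi \cong H^m$.

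I would then exhibit a splitting $K \to \Sta(C)$ by extending each $\phi \in K$ canonically across $\CC\setminus Z$, acting as the identity of $H$ on each component; the partition-preserving hypothesis ensures that this assembles into a valid $H$-block homeomorphism even when $\phi$ permutes the suited boundaries. Together these yield $\Sta(C) \cong H^m \rtimes K$, which embeds as a finite-index subgroup of $H^m \rtimes \Map_\partial(Z) \cong \Map_\partial(Z) \ltimes H^m$, as required. The main technical obstacle is constructing the splitting, which requires carefully matching the boundary parametrizations of $Z$ with those of the adjacent blocks so that the canonical ``identity'' extension is genuinely an $H$-block homeomorphism; everything else amounts to bookkeeping with the equivalence relation $\sim$ on $\CS$.
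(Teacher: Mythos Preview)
Your argument is correct and follows essentially the same route as the paper's proof: both reduce to $g=\id$, identify the stabilizer of $C$ with those $H$-block homeomorphisms preserving $Z$ and the set $A$ of suited boundary components to which blocks are attached, and observe this sits with finite index inside $\Map_\partial(Z)\ltimes H^m$. Your treatment is simply more explicit about the restriction map, its kernel, and the splitting, whereas the paper compresses these into the observation that $\Map(Z)\ltimes H^m$ already stabilizes the cube and invokes Remark~\ref{rem:finindex}.
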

\begin{proof}
Fix a cube $C$ in $\FX$ and let $x=[Z,g]$ be its bottom vertex. We may assume without loss of generality that $g=\id$. The cube $C$ is spanned by vertices obtained by attaching blocks to a collection~$A$ of  suited boundary components. Now any element $f\in \FB$ stabilizes the cube if and only if it both fixes~$x$ and leaves~$A$ invariant. The element $f$ fixes $x=[Z, \id]$ if and only if $f$ sends $Z$ to $Z$ and is labeled by a tuple $\bf h$ outside of $Z$ and so it is therefore a subgroup of $\Map_\partial(Z) \ltimes H^m$. Moreover $\Map(Z)\ltimes H^m$ preserves the cube and so the result follows from Remark~\ref{rem:finindex}.

\end{proof}

Since mapping class groups of compact surfaces are of type $F_\infty$ \cite{Har71}, and being of type $F_n$  is invariant under short exact sequences and passing to finite-index subgroups \cite[Section 7.2]{Ge08}, the previous lemma implies: 

\begin{cor}
Let $C$ be a cube in $\FX$. Then the stabilizer of $C$ in $\FB_d(H)$ has type $F_n$ if $H$ does. 
\end{cor}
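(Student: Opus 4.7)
The plan is to deduce this corollary directly from Lemma \ref{lem:stabofcells}, which identifies the stabilizer of the cube $C$ (with bottom vertex $[Z,g]$) as a finite-index subgroup of $\Map_\partial(Z) \ltimes H^m$, where $m$ is the number of suited boundary components of the suited subsurface $Z$. The goal is to verify that this semidirect product is of type $F_n$ whenever $H$ is, and then invoke the standard fact that passing to a finite-index subgroup preserves the property $F_n$.

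First I would show that $\Map_\partial(Z)$ is of type $F_\infty$. Since $Z$ is a suited subsurface, it is compact with finitely many boundary components. By Remark \ref{rem:finindex}, $\Map(Z)$ sits inside $\Map_\partial(Z)$ as a finite-index subgroup, and $\Map(Z)$ is of type $F_\infty$ by Harer's classical theorem \cite{Har71}. Since $F_\infty$ is preserved under finite-index overgroups \cite[Section 7.2]{Ge08}, $\Map_\partial(Z)$ is of type $F_\infty$, hence in particular of type $F_n$.

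Next, the hypothesis that $H$ is of type $F_n$ implies that the direct product $H^m$ is of type $F_n$, using that $F_n$ is closed under finite direct products \cite[Section 7.2]{Ge08}. The semidirect product $\Map_\partial(Z) \ltimes H^m$ then fits into the short exact sequence
\[
1 \to H^m \to \Map_\partial(Z) \ltimes H^m \to \Map_\partial(Z) \to 1,
\]
whose kernel is of type $F_n$ and whose quotient is of type $F_\infty$; by the standard short-exact-sequence result \cite[Section 7.2]{Ge08}, the middle group is of type $F_n$.

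Finally, by Lemma \ref{lem:stabofcells}, the stabilizer of $C$ is a finite-index subgroup of $\Map_\partial(Z) \ltimes H^m$, and hence is itself of type $F_n$ by the finite-index invariance of $F_n$ \cite[Section 7.2]{Ge08}. The only thing to be mindful of is the dependence of $m$ and $Z$ on the cube $C$: the argument works uniformly because $Z$ is always compact (so $\Map_\partial(Z)$ is $F_\infty$ regardless of $Z$), and $m$ is always finite, so no obstruction arises. I do not anticipate any genuine obstacle; this is essentially a bookkeeping assembly of standard closure properties of $F_n$ applied to the explicit description provided by Lemma \ref{lem:stabofcells}.
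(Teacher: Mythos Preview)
Your proof is correct and follows essentially the same approach as the paper: the paper simply notes that mapping class groups of compact surfaces are of type $F_\infty$ \cite{Har71}, that type $F_n$ is preserved under short exact sequences and finite-index subgroups \cite[Section 7.2]{Ge08}, and that the result then follows from Lemma~\ref{lem:stabofcells}. Your write-up just unpacks these steps in more detail.
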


\subsection{Descending links} 
We have now checked all of the conditions of Brown's Criterion except one, namely that the filtration $\{\FX^{\le m}\}_m$ is essentially $(n-1)$-connected. In fact, we will show the connectivity of the pairs $(\FX^{\le m}, \FX^{\le {m+1}})$ tends to $\infty$ as $m$ tends to $\infty$; here, recall that a pair of spaces $(L,K)$ with $K\subseteq L$ is $k$-connected if the inclusion map $K \hookrightarrow L$ induces an isomorphism in their respective homotopy groups $\pi_j$ for $j< k$ and an epimorphism in $\pi_k$. To establish this, we will apply {\em discrete Morse theory} to reduce the problem to analyzing the connectivity of the {\em descending links}.

Let $\CK$ be a piecewise Euclidean cell complex, and let~$h$ be a map from the set of vertices of $\CK$ to the integers, such that each cell has a unique vertex maximizing~$h$.  Call $h$ a \emph{height function}, and $h(y)$ the \emph{height} of $y$ for vertices~$y$ in $\CK$.  For $t\in\BZ$, define $\CK^{\leq t}$ to be the full subcomplex of~$\CK$ spanned by vertices~$y$ with $h(y)\leq t$. Similarly, define $\CK^{< t}$ and  $\CK^{=t}$.  The \emph{descending star} $\dst(y)$ of a vertex $y$ is defined to be the open star of~$y$ in~$\CK^{\le h(y)}$.  The \emph{descending link} $\dlk(y)$ of $y$ is given by the set of ``local directions'' starting at~$y$ and pointing into $\dst(y)$, that is, the link of $y$ in $\dst(y)$. For more details see \cite{BB97}. Now the following Morse Lemma is a consequence of \cite[Corollary~2.6]{BB97}.

\begin{lem}[Morse Lemma]\label{lemm-Morse}
Let $\CK$ be a piecewise Euclidean cell complex and let $h$ be a height function on $\CK$.
\begin{enumerate}
    \item Suppose that for any vertex $y$ with $h(y)=t$, $\dlk (y)$ is $(k-1)$-connected. Then the pair $(\CK^{\leq t}, \CK^{<t})$ is $k$-connected.
  \item Suppose that for  any vertex $y$ with $h(y) \geq t$, $\dlk (y)$ is $(k-1)$-connected. Then $(\CK,\CK^{<t})$ is $k$-connected.
\end{enumerate}
\end{lem}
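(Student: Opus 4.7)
The plan is to analyze the pair $(\FX^{\le t},\FX^{<t})$ by viewing $\CK^{\le t}$ as built from $\CK^{<t}$ by gluing in the closed descending stars $\overline{\dst(y)}$ for each vertex $y$ with $h(y)=t$. First, I would verify the combinatorial decomposition $\CK^{\le t}=\CK^{<t}\cup\bigcup_{h(y)=t}\overline{\dst(y)}$, together with the identifications $\overline{\dst(y)}\cap\CK^{<t}=\dlk(y)$ and $\overline{\dst(y_1)}\cap\overline{\dst(y_2)}\subseteq\CK^{<t}$ for distinct height-$t$ vertices $y_1,y_2$. The key point is the unique-maximum assumption on $h$: a cell $\sigma\subseteq\CK^{\le t}$ containing two vertices at height $t$ would have no unique maximum, so no such cell exists and the open descending stars of distinct height-$t$ vertices are disjoint.

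Second, I would observe that each closed star $\overline{\dst(y)}$ deformation retracts onto $y$; combinatorially it is the cone on $\dlk(y)$ with apex $y$. Together with the hypothesis that $\dlk(y)$ is $(k-1)$-connected, the long exact sequence of the pair $(\overline{\dst(y)},\dlk(y))$ shows this pair is $k$-connected, because contractibility of $\overline{\dst(y)}$ makes the boundary map $\pi_j(\overline{\dst(y)},\dlk(y))\to\pi_{j-1}(\dlk(y))$ an isomorphism for $j\le k$.

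Third, statement (1) follows by a gluing/excision argument. Given a map $\varphi\colon(D^i,S^{i-1})\to(\CK^{\le t},\CK^{<t})$ with $i\le k$, I would cellularly approximate $\varphi$ and, using that the closed descending stars for different height-$t$ vertices meet only inside $\CK^{<t}$, work with one star at a time: apply the $k$-connectedness of $(\overline{\dst(y)},\dlk(y))$ to homotope each piece of $\varphi$ landing in $\overline{\dst(y)}$ into $\dlk(y)\subseteq\CK^{<t}$, rel the part already in $\CK^{<t}$. Since distinct stars do not interfere, these local homotopies can be performed simultaneously to give a homotopy of $\varphi$ into $\CK^{<t}$.

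For statement (2), I would iterate (1): under the stronger hypothesis, the pair $(\CK^{\le s},\CK^{<s})$ is $k$-connected for every $s\ge t$. A compactness argument then finishes the job: any map $(D^i,S^{i-1})\to(\CK,\CK^{<t})$ with $i\le k$ has image in some $\CK^{\le s}$, and repeated application of (1) pushes it down through $\CK^{\le s-1},\CK^{\le s-2},\dots$ until it lies in $\CK^{<t}$. The main obstacle will be Step~3, namely making the simultaneous attachment precise when infinitely many vertices may share the same height; this is exactly the subtlety handled in \cite{BB97}, and it is resolved either by invoking compactness of the domain $D^i$ (which meets only finitely many stars) or by working star-by-star in a transfinite fashion.
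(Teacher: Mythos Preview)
Your sketch is correct and is essentially the standard Bestvina--Brady argument; note that the paper does not actually prove this lemma but simply records it as a consequence of \cite[Corollary~2.6]{BB97}, so there is no ``paper's own proof'' to compare against beyond that citation. Your outline matches the proof in \cite{BB97}.
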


Recall that the height function for the Stein-Farley complex $\FX$ is the affine extension of the complexity function $h$ given in Definition~\ref{defn:complexity}.

\subsection{From descending links to the complex of labeled blocks}
In this subsection we will determine the connectivity properties of descending links. Behind all of our arguments is Hatcher-Wahl's notion of a {\em complete join complex}, which we now recall:

\begin{defn}[Complete join]
A simplicial map $\pi:K \to L$ between simplicial complexes is said to be a {\em complete join} if 
\begin{itemize}
    \item $\pi$ is surjective;
    \item The restriction of $\pi$ to every simplex of $K$ is injective; 
    \item For each simplex $\sigma=\langle x_0, \ldots, x_p \rangle$ of $L$, the subcomplex of $K$ consisting of those simplices that map to $\sigma$ is equal to the join $\pi^{-1}(x_0) \ast \cdots \ast \pi^{-1}(x_p)$.
\end{itemize}
By a slight abuse of notation, we will say that $K$ is a {\em complete join over} $L$. 
\end{defn}
The interest of the above definition in our situation is the following result, which is Proposition 3.5 of \cite{HW10}; here, recall that a complex is {\em weakly Cohen-Macaulay of dimension $m$} if it is $(m-1)$-connected and the link of every $p$-simplex is $(m-p-2)$-connected. 

\begin{prop}
If $K$ is a complete join complex
over a weakly Cohen-Macaulay  complex $L$ of dimension $m$, then $K$ is also weakly Cohen-Macaulay of dimension $m$.
\end{prop}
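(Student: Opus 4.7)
The plan is to verify the two defining conditions of weak Cohen--Macaulayness for $K$: (i) global $(m-1)$-connectivity, and (ii) $(m-p-2)$-connectivity of the link of every $p$-simplex of $K$. A preliminary observation will let me bootstrap (ii) from (i) applied to a smaller complete join, so the heart of the proof will lie in the global connectivity step.

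First I would show that links in $K$ remain complete joins over links in $L$. Given a $p$-simplex $\tau$ of $K$, set $\sigma = \pi(\tau)$, which is a $p$-simplex of $L$ since $\pi$ is injective on simplices. For any simplex $\sigma' = \langle x_0,\dots,x_q\rangle$ of $\lk_L(\sigma)$, a simplex lying in $\pi^{-1}(\sigma*\sigma')$ and joining $\tau$ uses exactly the vertices of $\tau$ over $\sigma$ together with an arbitrary simplex in $\pi^{-1}(x_0)*\cdots*\pi^{-1}(x_q)$. Hence $\pi$ restricts to a complete join $\lk_K(\tau) \to \lk_L(\sigma)$. Since $L$ is weakly Cohen--Macaulay of dimension $m$, its link $\lk_L(\sigma)$ is weakly Cohen--Macaulay of dimension $m-p-1$; applying (i) to this restricted complete join yields (ii).

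To establish (i), the essential local fact is that the preimage of a $p$-simplex of $L$ is a join of $p+1$ non-empty discrete sets, hence homotopy equivalent to a wedge of $p$-spheres and in particular $(p-1)$-connected. One way to convert this local data into global connectivity is to realize $K$ as the homotopy colimit of the functor $\sigma \mapsto \pi^{-1}(\sigma)$ on the face poset of $L$ and compare, via a Bousfield--Kan spectral sequence, with the analogous (contractible-fiber) presentation of $L$. The fiber connectivities match the dimension shift precisely, so $\pi$ induces an isomorphism on $\pi_i$ for $i\leq m-1$, and the hypothesis on $L$ transfers to $K$. A more hands-on alternative is to fix a section $s\colon L\hookrightarrow K$ choosing one vertex in each $\pi^{-1}(x)$ and, using simplicial approximation together with the discrete join structure of the fibers, to homotope any map $S^i\to K$ with $i\leq m-1$ into the embedded copy $s(L)\cong L$, where it bounds by assumption.

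The main obstacle lies in this second step: combining the local $(p-1)$-connectivity of fibers into a coherent global bound requires careful control of how simplicial homotopies interact with the join structure of preimages. A tidy way to avoid spectral-sequence machinery is to induct on the total number of ``extra'' vertices of $K$, namely those in $\pi^{-1}(x)\setminus\{s(x)\}$ over all vertices $x$ of $L$, collapsing them one at a time to $s(x)$ and checking that each collapse changes the homotopy type in a controlled way via the join decomposition of preimages, so that $(m-1)$-connectivity is preserved throughout and inherited by $K$ from $L$ in the limit.
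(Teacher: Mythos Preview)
The paper does not give a proof of this proposition; it simply quotes it as Proposition~3.5 of Hatcher--Wahl \cite{HW10}. So there is no ``paper's approach'' to compare with, only the original source.

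Your reduction of the link condition~(ii) to the global connectivity condition~(i) is correct: the restriction of $\pi$ to $\lk_K(\tau)$ is indeed a complete join over $\lk_L(\pi(\tau))$, and the latter is weakly Cohen--Macaulay of dimension $m-\dim\tau-1$ because links of links are links of joins. This is exactly the standard reduction.

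For the global connectivity step, your ``inductive collapse'' is the approach that is closest to a complete argument, but as written it has two gaps. First, the induction must be a \emph{double} induction: an outer induction on $m$, and an inner induction on the number of extra vertices. The outer induction is essential because, when you delete an extra vertex $v\in\pi^{-1}(x)\setminus\{s(x)\}$, the pair $(K,K\setminus\st(v))$ is $(m-1)$-connected only once you know that $\lk_K(v)$ is $(m-2)$-connected; since $\lk_K(v)$ is a complete join over $\lk_L(x)$, which is weakly Cohen--Macaulay of dimension $m-1$, this is exactly the statement for $m-1$. Second, $K$ may have infinitely many extra vertices, so the inner induction need not terminate. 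The fix is to first pass to the sub-complete-join $K_0\subseteq K$ with fibers $\{s(x)\}\cup\bigl(\pi^{-1}(x)\cap f(S^k)\bigr)$, which contains both $s(L)$ and the image of a given simplicial map $f\colon S^k\to K$; this $K_0$ has only finitely many extra vertices, and removing them one by one yields a filtration $K_0\supset K_1\supset\cdots\supset K_N=s(L)$ with each pair $(K_i,K_{i+1})$ $(m-1)$-connected. With these two fixes the argument is complete and is essentially the one Hatcher--Wahl give.

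Your other two suggestions are viable in principle but need more work than you indicate. The homotopy-colimit approach requires identifying $K$ with the relevant hocolim, which is not entirely formal. The direct lifting via a section and simplicial approximation runs into a genuine obstruction: if a simplex of the filling disk $D^{k+1}$ contains a boundary vertex $v$ and an interior vertex $w$ with $g(v)=g(w)$, then the naive lift sends them to $f(v)$ and $s(g(w))$, which lie in the same fibre but need not coincide, so the lift fails to be simplicial. Resolving this is possible but amounts to the same inductive argument as above.
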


Let $x=[X,f]$ be a vertex of $\FX$; we want to analyze the connectivity properties of its descending link. First, we may assume that $f = {\rm id}$. Then a simplex in the descending link of $x$ is determined by $z = [Z,g]$, where there exists a collection $\{Y_0,\ldots, Y_{p}\}$ of pairwise distinct and pairwise disjoint blocks such that $[Z', g]\sim[X, \rm id]$ with $Z'=Z\cup \bigcup_{i=0}^p Y_p$.


Our goal now is to map $\lk^\downarrow(x)$ onto a weakly Cohen-Macaulay complex in such a way that $\lk^\downarrow(x)$ is a complete join complex over it. To this end, we have the following definition:

\begin{defn}[Block Complex]
Given a suited subsurface $W$ and a (not necessarily proper) subset $A$ of boundary components of $W$, the (unlabeled) block complex $\mathcal P(W,A)$ is the simplicial complex whose $p$-simplices are sets $\{Z_0,\ldots, Z_p\}$ of pairwise disjoint isotopy classes of subsurfaces of~$W$, each homeomorphic to $Y^d$, with the property that exactly one boundary component is essential in $W$, while the rest belong to $A$.
\end{defn}

We remark that $\mathcal P(W,A)$ is called the {\em piece complex} in \cite{ABF+21}. The following is  \cite[Theorem 10.1]{ABF+21};

\begin{thm}\label{thm:main-2d}
Let $W_g^b$ be the compact, connected surface of genus $g$ with $b$ boundary components, and let $A$ be a (not necessarily proper) subset of these boundary components. Then $\mathcal P(W_g^b,A)$ is weakly Cohen-Macaulay of dimension $m$ provided that $m \leq \frac{g-1}{2}$ and $m \leq \frac{|A|+2d}{2d-1}$.
\end{thm}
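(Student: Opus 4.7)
The plan is to exhibit $\mathcal P(W_g^b, A)$ as a complete join complex over a simpler curve complex on $W_g^b$, and then invoke the Hatcher--Wahl principle (quoted above) that a complete join over a weakly Cohen--Macaulay complex of dimension $m$ is itself weakly Cohen--Macaulay of dimension $m$. This single step simultaneously delivers both the $(m-1)$-connectivity of $\mathcal P(W_g^b, A)$ and the $(m-p-2)$-connectivity of the link of every $p$-simplex, since the link of a simplex in a complete join is again a complete join over the link in the base.

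First, I would define a simplicial projection $\phi\colon \mathcal P(W_g^b, A) \to \mathcal C(W_g^b, A)$ that forgets the ``inner data'' of a block and remembers only its unique essential boundary curve in $W_g^b$. The target $\mathcal C(W_g^b, A)$ is the subcomplex of the curve complex of $W_g^b$ whose vertices are those essential simple closed curves that bound, on one side, a subsurface homeomorphic to $Y^d$ having $d$ of its boundary components in $A$. Simplices in $\mathcal C(W_g^b, A)$ are collections of such curves whose bounded $Y^d$-subsurfaces can be chosen pairwise disjoint. Crucially, a block $Z$ in $\mathcal P(W_g^b, A)$ is determined by the pair (essential boundary curve, chosen $d$-subset of $A$ on the $Y$-side), so $\phi$ has a clean set-theoretic description.

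Second, I would verify that $\phi$ is a complete join projection: for any simplex $\{c_0, \dots, c_p\}$ of $\mathcal C(W_g^b, A)$, every independent choice of a $d$-subset $S_i \subset A$ on the $Y$-side of each $c_i$, subject only to the $S_i$ being pairwise disjoint, yields a simplex of $\mathcal P(W_g^b, A)$. Third, I would establish that $\mathcal C(W_g^b, A)$ is weakly Cohen--Macaulay of dimension $m$ by combining the classical connectivity results for curve complexes of surfaces with boundary (the bound $m \leq (g-1)/2$ mirrors Harer's connectivity estimate for the curve complex) with a double induction on $g$ and $|A|$: the bound $m \leq (|A|+2d)/(2d-1)$ is precisely what is needed so that, after removing the $(p+1)d$ boundary components of $A$ absorbed by a $p$-simplex and regaining $p+1$ new boundaries from the essential curves, the surface $W'$ and set $A'$ supporting the link still satisfy the inductive hypothesis for dimension $m-p-1$.

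The main obstacle, I expect, is the joint bookkeeping that the complete join condition requires. In a purely curve-complex setting fibers would be independent, but here the $d$-subsets $S_i$ chosen in different fibers must be pairwise disjoint in $A$, so the fibers couple non-trivially and the naive join structure need not hold. Overcoming this will likely require either (i) an auxiliary ``ordered'' version of the complex on which the symmetric groups permuting boundary components of $A$ act, with $\mathcal P(W_g^b,A)$ as quotient, or (ii) a Hatcher-style bad-simplex argument that deforms simplices violating disjointness into ones that satisfy it, absorbing the defect. Tuning this argument is exactly what forces the arithmetic bound $m \leq (|A|+2d)/(2d-1)$, and checking that the inductive step on links goes through with the correct numerical slack is the technical heart of the proof.
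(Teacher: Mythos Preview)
The paper does not prove this theorem at all: it is stated as \cite[Theorem~10.1]{ABF+21} and simply quoted. There is therefore no ``paper's own proof'' to compare your proposal against; the present paper treats this result as a black box and applies it (via Proposition~\ref{prop:join}) to conclude that descending links are highly connected.

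Your outline is a plausible shape for how such a result might be established, and indeed the argument in \cite{ABF+21} does proceed by relating the piece complex to curve- and arc-type complexes whose connectivity is known. That said, one detail in your sketch is off: a vertex of $\mathcal P(W_g^b,A)$ is an isotopy class of a subsurface $Z\cong Y^d$, and once the essential boundary curve $c$ of $Z$ is fixed, the $d$ boundary components of $Z$ lying in $A$ are already determined (they are the components of $\partial W$ on the $Z$-side of $c$); they are not an independent choice. So the fibre of your map $\phi$ over $c$ is not indexed by $d$-subsets of $A$, and the ``coupling of fibres through disjoint $d$-subsets'' that you identify as the main obstacle is not quite the right picture. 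The actual difficulty in \cite{ABF+21} is controlling connectivity after cutting along the essential curves of a simplex, which is where both numerical bounds enter; your inductive bookkeeping in the third step is closer to the real argument than your proposed fibre description.
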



Let $x$ be a vertex of $\FX$; as above, we can assume $x=[X,{\rm id}]$. We now define a map 
$$\Pi: \lk^\downarrow(x) \to \mathcal P(X, A),$$
where $A$ is the set of all suited boundary components of $X$. Let $z\in \lk^\downarrow (x)$. As such, we can write $z=[Z,g]$, where there exists a surface $Z'$ which is obtained from~$Z$ by adding pairwise disjoint pieces $Y_0, \dots, Y_p$ so that~$g$ maps~$Z'$ to $X$ where~$Z'$ is a defining surface for~$g$ as an $H$-block homeomorphism. We then set $\Pi(z)=\{g(Y_0), \dots, g(Y_p)\}$.

We first check that the map $\Pi$ is well-defined; to this end, let $(W, h)$ be another representative of~$z$. As before, this implies there exists a surface~$W'$ obtained from~$W$ by adding pairwise disjoint pieces $Y_0', \dots, Y_p'$ so that~$h$ maps~$W'$ to~$X$. By the definition of the equivalence relation, this means $h^{-1}\circ g$ maps~$Z$ to~$W$ and in particular, $g(Z)=h(W)\subseteq X$. Therefore, $\{g(Y_0), \dots, g(Y_p)\}=\{h(Y_0'), \dots, h(Y_p')\}$, and we are done.

Next, we have:

\begin{prop}\label{prop:join}
The map $\Pi$ is a complete join.
\end{prop}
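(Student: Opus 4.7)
The plan is to verify the three defining conditions of a complete join in turn. For \emph{surjectivity}, given a $p$-simplex $\sigma = \{B_0,\ldots,B_p\}$ of $\mathcal{P}(X,A)$, the defining properties of the block complex ensure that the $B_i$'s are pairwise disjoint outermost blocks of $X$ attached at suited boundaries (each has exactly one boundary essential in $X$ and the remaining $d$ in $A$). Hence $Z := X \setminus \bigsqcup_{i=0}^p B_i$ is a suited subsurface of $\CC$, and the interval $[[Z,\id],\, x]$ forms a $(p+1)$-cube in $\FX$; its link at $x$ is a $p$-simplex of $\dlk(x)$ whose $\Pi$-image is exactly $\sigma$.

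For \emph{injectivity on simplices}, let $\tau$ be a $p$-simplex of $\dlk(x)$. It corresponds to a $(p+1)$-cube $[z,x]$ in $\FX$ whose bottom vertex is $z=[Z,g]$, where $Z' = Z \cup \bigsqcup_{i=0}^p Y_i$ is a defining surface for $g$ with $g(Z')=X$. The $p+1$ vertices of $\tau$ are indexed by the $Y_i$, and $\Pi$ sends the vertex corresponding to $Y_i$ to the block $g(Y_i) \subset X$. Since $g$ is a homeomorphism and the $Y_i$ are pairwise disjoint in $Z'$, their images are pairwise distinct blocks of $X$, which gives the desired injectivity.

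For the \emph{join property}, I must show that for each simplex $\sigma = \{B_0,\ldots,B_p\}$ the subcomplex $\Pi^{-1}(\sigma)$ of $\dlk(x)$ equals the join $\Pi^{-1}(B_0)*\cdots*\Pi^{-1}(B_p)$. The inclusion $\Pi^{-1}(\sigma) \subseteq \Pi^{-1}(B_0)*\cdots*\Pi^{-1}(B_p)$ is immediate from injectivity on simplices: any simplex in $\Pi^{-1}(\sigma)$ has vertex set projecting injectively into $\sigma$, hence at most one vertex from each fiber. For the reverse inclusion, given any vertices $v_{i_j} \in \Pi^{-1}(B_{i_j})$ for a subset $\{i_0,\ldots,i_r\} \subseteq \{0,\ldots,p\}$, I must produce a simplex of $\dlk(x)$ containing them. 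Each $v_{i_j}$ encodes, via an appropriate representative, a choice of ``label'' for the detachment of the block $B_{i_j}$ from $X$; since the $B_{i_j}$'s lie in pairwise distinct connected components of $\CC \setminus (X\setminus\bigsqcup_j B_{i_j})$, these labels can be assembled independently into a single $H$-block homeomorphism $g$, producing a vertex $z = [X \setminus \bigsqcup_j B_{i_j},\, g]$ at the bottom of an $(r+1)$-cube $[z,x]$ in $\FX$ whose link at $x$ is the desired simplex.

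The main technical obstacle lies in this last assembly: verifying that the independent choices of labels on disjoint blocks combine into a well-defined element of $\FB_d(H)$ whose equivalence class under $\sim$ restricts correctly to each $v_{i_j}$ upon collapsing back to a single removed block. This coherence is built into the definition of $H$-block homeomorphism itself: the label $h_a \in H$ at a suited boundary $a$ of the defining surface governs only the connected component of $\CC \setminus Z$ adjacent to $a$, so labels across distinct components are free of one another, which is exactly the structure formalized by the join decomposition.
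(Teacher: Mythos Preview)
Your argument has two genuine gaps, both stemming from a misreading of the objects involved.

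\textbf{Surjectivity.} Vertices of $\mathcal P(X,A)$ are \emph{isotopy classes} of subsurfaces of $X$ homeomorphic to $Y^d$, with one boundary component essential in $X$ and the remaining $d$ lying in $A$; they are \emph{not} required to be blocks of the suited decomposition of $X$. The essential boundary of such a $B_i$ is an arbitrary simple closed curve in $X$, not a suited curve, so $X\setminus\bigsqcup_i B_i$ is typically not a suited subsurface and $[X\setminus\bigsqcup_i B_i,\id]$ is not a vertex of $\FX$. The paper handles this by choosing genuine pairwise disjoint blocks $Y_i\subset X$ and then building a homeomorphism $g\colon X\to X$ with $g(Y_i)=Z_i$, so that $[X\setminus\bigcup_i Y_i,\,g]$ furnishes the preimage.

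\textbf{Join property.} Your description of the fiber $\Pi^{-1}(B)$ as a set of ``labels'' attached at the detachment of $B$ is exactly the mistake flagged in Remark~\ref{rem:chris}. A vertex $[W,g]\in\Pi^{-1}(B)$ records the full equivalence class of the $H$-block homeomorphism $g$ (satisfying $g(W\cup Y)=X$ and $g(Y)=B$), not merely an element of $H$ localized near $B$; two preimages of the same $B$ can differ by homeomorphisms supported throughout $X$. Hence your ``independent assembly'' step fails: given $v_i\in\Pi^{-1}(B_i)$, the associated homeomorphisms $g_i$ interact globally on $X$ and cannot simply be superposed on disjoint pieces. The paper's argument proceeds differently: it first uses the equivalence relation to replace each $(W_i,g_i)$ by a representative with $W_i\cup Y_i=X$ and $g_i(X)=X$, then further normalizes so that the source blocks $Y_i=g_i^{-1}(B_i)$ are pairwise disjoint in $X$, and only then constructs a single $g\in\FB$ with $g(X)=X$ sending each $Y_i$ to $B_i$ and agreeing suitably with each $g_i$. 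This reduction step is the substance of the proof and is absent from your proposal.
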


\begin{remark}\label{rem:chris}
In a previous version, we claimed that there was an isomorphism $\Pi$ from the descending link to the \emph{labeled block complex}. We are grateful to Chris Leininger for pointing out that this was not correct, and for suggesting the strategy for showing that it is a complete join. 
\end{remark}

\begin{proof}[Proof of Proposition \ref{prop:join}]

Write $x=[X, \rm id]$, and let $\{Z_0,\ldots, Z_p\}$ be a $p$-simplex in $\mathcal P(X, A)$. Let $\{Y_0, \ldots, Y_p\}$ be a collection of disjoint blocks in $X$ such that each $Y_i$ has exactly two non-essential boundary components. 
Now,  $X\setminus \cup_{i=0}^p Z_i$ is homeomorphic to $X\setminus \cup_{i=0}^p Y_i$ and thus there is a homeomorphism $g:X\rightarrow X$ which takes $X\setminus \cup_{i=0}^p Y_i$ to $X\setminus \cup_{i=0}^p Z_i$ and such that $g(Y_i)=Z_i$. We conclude that the map $\Pi$ is surjective.

Note that the disjointness condition on the links and on the block complex and the fact that this property is naturally preserved by $\Pi$ implies that the map is simplexwise injective.

It remains to show that the pre-image of a simplex is the join of the pre-images of its vertices. Clearly the pre-image of a simplex is contained in the join of the pre-images of its vertices so we only need to show the reverse inclusion. To this end, let $\sigma= \{Z_0,\ldots, Z_p\}$ be an $p$-simplex in $\mathcal P(X, A)$. Take vertices \[[W_0, g_0], \ldots, [W_p,g_p]\] of $\mathfrak X$  such that $\Pi([W_i,g_i])=Z_i$; we want to see that these vertices in fact span an $p$-simplex in $\lk^\downarrow(x)$.

We now make several reductions. We first claim that we may assume that, for all $i=0, \ldots, p$, we have that $W_i\cup g_i^{-1}(Z_i)=X$, so that $g_i^{-1}(Z_i)$ is a block and $g_i:X\rightarrow X$ is an element of $\FB$ with defining surface $X$. To see this, let $Y_i=g_i^{-1}(Z_i)$, which by construction is a block, and consider any $H$-block homeomorphism $h$ which takes $W_i\cup Y_i$ to $X$ and such that $W_i$ is a defining surface for $h$, i.e. the image of $Y_i$ is also a block. Now we see that $(W_i, g_i)\sim (h(W_i), g_i \circ h^{-1})$, and that this new representative has the desired properties. 

We thus assume without loss of generality that $W_i\cup Y_i=X$ and $g_i(X)=X$ for all $i=\ldots, p$. Next, we claim that we can assume that the non-essential boundary components of $Y_i=g_i^{-1}(Z_i)$ are disjoint from those of $Y_j=g_j^{-1}(Z_j)$ for all $i\neq j$. Indeed, given any block $Y$ with two non-essential boundary components, there is an $H$-block homeomorphism $f$ with $f(X)=X$ and $f(Y_i)=Y$. Moreover $(W_i, g_i)\sim (f(W_i), g_i\circ f^{-1})$. Since $f(Y_i)=Y$, we can freely choose the location of the block $Y_i$ in $X$ among blocks with two non-essential boundary components. 

We thus assume without loss of generality that $Y_i$ is disjoint from $Y_j$ for each $i\neq j$. Finally,  note that since $Y_0, \dots, Y_p$ are pairwise disjoint and $g_i(Y_i)$ are pairwise disjoint and the complements $X\backslash Y_i$ are homeomorphic to the complements $X\backslash g_i(Y_i)$, we can construct an element $g$ in $\FB$ with $g(X)=X$ and which agrees with $g_i$ on $W_i$. In particular, the collection $\{[W_i, g]\}_{i=0}^{i=p}$ spans a simplex in $\lk^\downarrow(x)$.


\end{proof}

\begin{cor}[Part ``if" of Theorem \ref{thm:main}]
If $H$ is of type $F_n$, so is $\mathfrak B_d(H)$.
\end{cor}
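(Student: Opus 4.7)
The strategy is to apply Brown's Criterion to the action of $\mathfrak B_d(H)$ on the Stein--Farley cube complex $\FX$, using the sublevel filtration $\{\FX^{\le m}\}_{m\ge 1}$ with respect to the height function $h$. Three of the hypotheses of Brown's Criterion are already in place: $\FX$ is contractible (so in particular $(n-1)$-connected); the action on $\FX^{\le m}$ is cocompact for each $m$ (Lemma~6.2); and the stabilizer of every cube of $\FX$ is a finite-index subgroup of $\Map_\partial(Z) \ltimes H^k$ (Lemma~6.3), which is of type $F_n$ whenever $H$ is, since $\Map_\partial(Z)$ is of type $F_\infty$ by Harer's theorem and finite-index subgroups and extensions preserve type $F_n$. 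The remaining task, which is the bulk of the proof, is to show that the filtration $\{\FX^{\le m}\}_{m\ge 1}$ is essentially $(n-1)$-connected.

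For this I would use the Morse Lemma (Lemma~7.3). It suffices to show that for every $k$, there exists $t_0 = t_0(k)$ such that whenever $h(x) \ge t_0$, the descending link $\dlk(x)$ is $(k-1)$-connected: once this is established for $k = n$, Morse Lemma part (2) yields that $(\FX, \FX^{<t_0})$ is $n$-connected, and since $\FX$ is contractible, the long exact sequence of the pair forces $\pi_\ell(\FX^{<t})=0$ for all $\ell \le n-1$ and all $t \ge t_0$. This in turn implies the essential $(n-1)$-connectivity of $\{\FX^{\le m}\}$, because for $m<t_0$ one chooses $m' = t_0$ (so that $\FX^{\le m'}$ is $(n-1)$-connected), while for $m \ge t_0$ the complex $\FX^{\le m}$ itself is $(n-1)$-connected.

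To control the connectivity of descending links, I would combine the three key ingredients already prepared in the excerpt. First, by Proposition~6.6, $\dlk(x)$ with $x=[X,\id]$ is a complete join complex over the block complex $\Pa(X, A)$, where $A$ is the set of suited boundary components of $X$. Second, by Proposition~6.5, a complete join over a weakly Cohen--Macaulay complex of dimension $m$ is itself weakly Cohen--Macaulay of dimension $m$, and in particular $(m-1)$-connected. Third, by Theorem~7.1, $\Pa(X,A)$ is weakly Cohen--Macaulay of dimension $m$ provided $m \le (g(X)-1)/2$ and $m \le (|A| + 2d)/(2d-1)$. As the height of $x$ grows, the number of blocks in $X$ grows, hence $|A|$ grows linearly in $h(x)$; and when $Y$ is a torus, $g(X)$ also grows linearly. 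Consequently both bounds tend to $\infty$ with $h(x)$, so $\dlk(x)$ becomes arbitrarily highly connected.

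\textbf{Main obstacle.} The principal difficulty is the sphere case $Y\cong \BS^2$, where the genus of every suited subsurface equals $g(O)$ and hence the bound $m\le (g-1)/2$ does not grow with height. In that case one must extract the connectivity entirely from the second bound $m \le (|A|+2d)/(2d-1)$, and so verifying (or refining) Theorem~7.1 to guarantee that this alone is sufficient when the blocks are planar is the delicate point. Assuming this genus-free connectivity estimate, the rest of the argument is a direct combination of Morse theory on $\FX$ with Propositions 6.5 and 6.6, and Brown's Criterion then gives the conclusion.
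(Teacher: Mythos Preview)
Your approach is exactly the paper's: the proof there is a one-line appeal to Brown's Criterion, and you have correctly unpacked all of its ingredients---contractibility of $\FX$, cocompactness of the sublevel sets, finiteness properties of cube stabilisers via Lemma~\ref{lem:stabofcells}, and essential $(n-1)$-connectedness of the filtration via the Morse Lemma together with Proposition~\ref{prop:join} and Theorem~\ref{thm:main-2d}.

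Your ``main obstacle'' is a fair reading of Theorem~\ref{thm:main-2d} as written, but it is not an actual obstruction. In \cite{ABF+21} the two bounds track, respectively, the genus and the number of elements of $A$ consumed by each piece $Y^d$; when $Y\cong\BS^2$ the pieces are planar, no genus is consumed, and only the second inequality $m\le (|A|+2d)/(2d-1)$ is operative. Since $|A|$ grows linearly with $h(x)$ in both the sphere and the torus case, the descending links become arbitrarily highly connected as $h(x)\to\infty$, and your Morse-theoretic argument goes through verbatim. So there is no gap in your reasoning; the issue lies only in the compressed way the piece-complex theorem is quoted here.
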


\begin{proof}
By the above results, since $H$ is of type $F_n$ then we can apply Brown's criterion above to the action of $\mathfrak B_d(H)$ on the cube complex $\mathfrak X_d(O,Y,H)$, thus obtaining that $\mathfrak B_d(H)$ is of type $F_n$, as desired. 
\end{proof}

\section{From global to local}
Finally, in this section we prove that if $\mathfrak B_{d}(H)$ is of type $F_n$, then so is~$H$.  In a nutshell, we will show that $H$ is a {\em quasi-retract} of $\mathfrak B_{d}(H)$, following the strategy in \cite[Section 3.5]{SW21a}. 

Before we proceed, we briefly recall the definition of  quasi-retraction between metric spaces. 
Let $X$ and $Y$ are metric spaces. A function $f:X\to Y$ is said to be {\em coarsely Lipschitz} if there exist constants $C,D>0$ so that 
$$d(f(x),f(x')) \leq Cd(x,x')+D \text{ for all } x,x'\in X.$$

A coarsely Lipschitz function $\rho:X\to Y$ is said to be a quasi-retraction if there exists a coarse Lipschitz function $\iota:Y\to X$ and a constant $E>0$ so that $d(\rho\circ \iota(y),y)\leq E$ for all $y\in Y$. In this situation, we say that $Y$ is a {\em quasi-retract} of $X$. In the realm of groups, we say that a finitely generated group $Q$ is a quasi-retract of another finitely generated group $G$ if $Q$ is a quasi-retract of $G$ with respect to the word metric given by some (and hence, any) choice of finite generating sets. We have:

\begin{thm}\cite[Theorem 8]{Al94}\label{thm-quasi-re-fin}
Let $G$ and $Q$ be finitely generated groups such that $Q$ is a quasi-retract of $G$. If $G$ is of type $F_n$, so is $Q$. 
\end{thm}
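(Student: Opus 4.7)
The plan is to apply Brown's criterion to a suitable $Q$-action constructed from the finiteness data witnessing that $G$ is of type $F_n$. Since $G$ is of type $F_n$, there is an $(n-1)$-connected CW complex $X_G$ on which $G$ acts freely and cellularly, with finitely many $G$-orbits of cells in each dimension $\le n$; for concreteness, $X_G$ may be taken to be the $n$-skeleton of the universal cover of a $K(G,1)$ with finite $n$-skeleton. The strategy is then to use the quasi-retraction maps $\iota:Q\to G$ and $\rho:G\to Q$, together with the near-identity property $d_Q(\rho\circ\iota(q),q)\le E$, to transfer this finiteness data from $G$ to $Q$.

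Since neither $\iota$ nor $\rho$ is a group homomorphism, a direct pullback of $X_G$ is unavailable, and so one must work at the coarse-geometric level. The natural setting is that of Vietoris--Rips-style complexes: for each $k\ge 1$, let $R_k(G)$ and $R_k(Q)$ denote the simplicial complexes with vertex sets $G$ and $Q$ respectively, and simplices given by finite subsets of diameter $\le k$ in the respective word metrics. These carry proper cocompact actions of $G$ and $Q$ by left multiplication, with finite cell stabilizers. The fact that $G$ is of type $F_n$ translates (through $X_G$) into the statement that the filtration $\{R_k(G)\}_k$ is essentially $(n-1)$-connected, and the goal is to propagate this property to the filtration $\{R_k(Q)\}_k$.

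The main obstacle is transferring this essential $(n-1)$-connectivity from $\{R_k(G)\}_k$ to $\{R_k(Q)\}_k$. For $k$ sufficiently large relative to the Lipschitz constants, $\iota$ and $\rho$ induce simplicial maps $\iota_*:R_k(Q)\to R_{k'}(G)$ and $\rho_*:R_k(G)\to R_{k''}(Q)$, and the coarse identity property ensures that $\rho_*\circ\iota_*$ agrees with the identity on $R_k(Q)$ up to a uniformly bounded combinatorial distance. Given a $j$-sphere $S$ in $R_k(Q)$ with $j<n$, I would push it forward through $\iota_*$, fill it with a $(j+1)$-disc in a larger $R_\ell(G)$ using the essential connectivity for $G$, pull the filling back through $\rho_*$, and close the small gap between the resulting boundary and the original $S$ using the near-identity property (a step which requires the \emph{quasi-retraction} hypothesis, strictly stronger than being merely coarsely Lipschitz). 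Once essential $(n-1)$-connectivity of $\{R_k(Q)\}_k$ is established, Brown's criterion, applied together with the $Q$-cocompactness and finiteness of cell stabilizers of each $R_k(Q)$, yields that $Q$ is of type $F_n$.
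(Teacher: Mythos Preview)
The paper does not give its own proof of this statement: it is quoted verbatim as \cite[Theorem~8]{Al94} and used as a black box. So there is no ``paper's proof'' to compare against; what you have written is a sketch of Alonso's theorem itself.

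That said, your outline is in the right spirit and close to Alonso's original argument, which also proceeds via Rips-type complexes and coarse Lipschitz maps. Two points deserve more care before the sketch counts as a proof. First, the equivalence ``$G$ is of type $F_n$ $\Leftrightarrow$ the Rips filtration $\{R_k(G)\}_k$ is essentially $(n-1)$-connected'' is itself a nontrivial statement; you invoke it in both directions (once for $G$, once for $Q$), and the gesture ``through $X_G$'' is not enough---one needs a genuine comparison between $X_G$ and $R_k(G)$, typically via equivariant maps built from the orbit map and the finiteness of the $n$-skeleton. Second, in the filling step, after pulling the disk back through $\rho_*$ you obtain a disk whose boundary lies at bounded distance $E$ from the original sphere $S$; closing that gap requires a homotopy inside some $R_{k'''}(Q)$, and you should say explicitly that two simplicial maps that are vertexwise at distance $\le E$ are contiguous (hence homotopic) once the Rips parameter exceeds the old parameter plus $E$. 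With these two points fleshed out, the argument goes through.
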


After these definitions, observe that we have a canonical inclusion homomorphism
\[\iota: H \rightarrow \mathfrak B_{d}(H)\] given by the rule $h\mapsto [O_2, (\id, (h, 1, \ldots, 1)), O_2]$, where recall that $O_2$ is the surface that appears at the second step in the construction of $\CC_d(O,Y)$; in other words, $\iota(H)$ labels the leftmost suited curve of $O_2$ by $h$ and the other suited curves by the identity. Next, we define a map 
\[r:\mathfrak B_d(H) \to H\] by setting $r(f) = h_1$, where $f=[Z,(f_Z, \mathbf h), f(Z)]$ and $\mathbf h=(h_1, \ldots, h_n)$; observe that $r(f)$ does not depend on the particular choice of such representative. We have: 

\begin{prop}\label{prop-fg}
$\mathfrak B_{d}(H)$ is generated by $\iota(H)$ and $\mathfrak B_{d} = \mathfrak B_{d}(\{1\})$. 
\end{prop}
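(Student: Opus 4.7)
The plan is to reduce a general element $f \in \mathfrak B_d(H)$ to a product of an element of $\mathfrak B_d = \mathfrak B_d(\{1\})$ and conjugates (by elements of $\mathfrak B_d$) of elements of $\iota(H)$. I proceed in three steps: strip off the labels, split the resulting element into single-coordinate factors, and realize each factor as a conjugate of some $\iota(h_i)$.

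Fix a representative $f = [Z,(f_Z,\mathbf h), f(Z)]$ with $\mathbf h=(h_1,\dots,h_n)$, and set $g := [Z,(f_Z,(1,\dots,1)), f(Z)] \in \mathfrak B_d$. A direct calculation with the composition rule for block mapping classes gives that $g^{-1}f$ admits the representative $k := [Z,(\id_Z,\mathbf h),Z]$; that is, $k$ is isotopic to the identity on $Z$ and eventually acts like $h_i$ on the $i$-th suited component $C_i$ of $\CC_d \setminus Z$. Hence $f=gk$, and the task reduces to writing $k$ as a product of elements of the desired form.

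For each $i=1,\dots,n$, let $k_i := [Z,(\id_Z,\mathbf e_i),Z]$, where $\mathbf e_i$ is the tuple whose $i$-th entry is $h_i$ and whose other entries equal $1$. Each $k_i$ lies in $\mathfrak B_d(H)$ and has support contained in the pairwise disjoint $C_i$, so the $k_i$ pairwise commute and a straightforward verification gives $k = k_1 \cdots k_n$. Now, using the surjection $\pi:\mathfrak B_d \twoheadrightarrow V_d$ of Proposition~\ref{prop:surj-symthom} (with trivial local group) together with the well-known transitivity of $V_d$ on the leaves of finite complete rooted subtrees of $\mathcal T_d$, I pick, for each $i$, an element $b_i \in \mathfrak B_d$ that sends the leftmost suited component of $O_2$ onto $C_i$. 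Since $b_i$ is a $\{1\}$-block homeomorphism, its restriction to every block is trivial under the $\iota_B$ identifications, so conjugation by $b_i$ transports the single nontrivial label of $\iota(h_i)$ to $C_i$ without introducing any twist: $b_i \iota(h_i) b_i^{-1} = k_i$. Assembling everything,
\[
f \;=\; g \cdot b_1 \iota(h_1) b_1^{-1} \cdots b_n \iota(h_n) b_n^{-1},
\]
which lies in the subgroup generated by $\iota(H) \cup \mathfrak B_d$.

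The main obstacle is the last step: one must check that conjugation by $b_i$ does not introduce an unwanted twist in the labels, since a priori $b_i \iota(h_i) b_i^{-1}$ could differ from $k_i$ by post-composition with some other element of $H$ acting on $C_i$. The key point ensuring this does not happen is precisely that $b_i \in \mathfrak B_d(\{1\})$ has trivial local action — its class on each block, read through the $\iota_B$'s, is the identity in $\Map_\partial(Y^d, b_0)$ — so the label $h_i$ is transported intact.
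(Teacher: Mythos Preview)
Your argument is correct and follows the same plan as the paper: peel off the trivially-labeled factor in $\mathfrak B_d$, break the residual pure-label element $[Z,(\id,\mathbf h),Z]$ into single-coordinate pieces, and express each piece as a $\mathfrak B_d$-conjugate of some $\iota(h)$. The paper carries out the last step by an explicit hands-on reduction (first the case $X=O_2$, then shrinking a general $X$ via the equivalence relation), whereas you invoke transitivity of $V_d$ on cones and lift through $\pi$; the only point to tighten is that a $\{1\}$-block homeomorphism has trivial label only on blocks \emph{outside} its defining surface, so $b_i$ must be chosen so that the leftmost component of $\CC_d\setminus O_2$ is a full complementary component of that surface---which is exactly what your $V_d$-lift provides once stated carefully.
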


\begin{proof}
Let $G$ be the group generated by $\iota(H)$ and $\mathfrak B_d$; we want to prove that $G=\mathfrak B_d(H)$. To this end, let $f\in \mathfrak B_{d}(H)$ and choose a tree-pair representative $(Z, (f_Z, \mathbf h), f(Z))$ for it, observing that 
\[[Z, (f_Z, \mathbf h), f(Z)] = [Z, (f_Z, {\rm id}), f(Z)]\cdot[f(Z),({\rm id}, \mathbf h), f(Z)]\]
Since $[Z, (f_Z, {\rm id}), f(Z)] \in \mathfrak B_d$, it suffices to show that an element of the form $[X,({\rm id}, \mathbf h), X]$ belongs to $G$. We can assume that $O_2\subseteq X$.

Suppose for the time being that $X=O_2$. In this case, let\\
$\lambda_j(h) := (1,\cdots,1,h,1\cdots,1)$ where the only nontrivial labeling $h$ appears on the $j$-th position, then 
\[
[O_2, ({\rm id}, \lambda_j(h)), O_2]= [O_2, (g, {\rm id}), O_2][O_2, ({\rm id}, \lambda_1(h)), O_2][O_2, (g^{-1}, {\rm id}), O_2],
\] 
where $g \in \mathfrak B_d$ is any element supported on $O_2$ and that interchanges the first and $j$-th suited boundary components of $O_2$, fixing the rest. Since \[[O_2, ({\rm id}, \lambda_1(h)), O_2] \in \iota(H)\] and for $\mathbf h=(h_1\cdots,h_d)$\[
[O_2, ({\rm id},  \mathbf h), O_2] = \prod_{j=1}^d [O_2, ({\rm id},  \lambda_j(h_j)), O_2],
\] we are done in this case.

Suppose then that $X$ is a suited subsurface whose left most suited loop is also a suited loop of $O_2$.  By the same conjugation argument as in the previous paragraph, we can assume that $\mathbf h=(h_1, 1, \ldots, 1)$. But then  using the equivalence relation, we have
\[
[X,({\rm id}, (h_1, 1, \ldots, 1)), X] = [O_2,({\rm id}, (h_1, 1, \ldots, 1)), O_2],
\]
which already lies in the subgroup generated by $\iota(H)$ and $\mathfrak B_{d}$. Now for an arbitrary suited subsurface $X$, again using the conjugation argument, we can assume that $\mathbf h=(h_1, 1, \ldots, 1)$. But then $[X,({\rm id}, \mathbf h_1), X]$ is conjugate to $[X,(\mathrm{id},(1,\cdots,h_1),X]$ via an element in $\mathfrak B_{d}$. Now we can find a suitable smaller suited subsurface $X'$ whose left most suited loop is also a suited loop of $O_2$, such that  \[[X,(\mathrm {id},(1,\cdots,h_1)),X] = [X',(\mathrm {id},(1,\cdots,h_1)),X'].\]
This is known to lie in the subgroup generated by $\iota(H)$ and $\mathfrak B_{d}$ by the previous step and hence we are done. 
\end{proof}

\begin{thm}\label{thm:qs-retract-bmap}
When $H$ is finitely generated, it is a quasi-retract of $\mathfrak B_d(H)$.
\end{thm}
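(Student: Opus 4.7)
The plan is to realize $H$ as a quasi-retract of $\mathfrak B_d(H)$ using the inclusion $\iota$ and the retraction $r$ introduced just before the statement. The first step is to ensure $\mathfrak B_d(H)$ itself is finitely generated: Proposition~\ref{prop-fg} expresses it as generated by $\iota(H)$ together with $\mathfrak B_d=\mathfrak B_d(\{1\})$, and the latter is of type $F_\infty$, hence finitely generated, by \cite{ABF+21}. Fix finite generating sets $S_H$ of $H$ and $T$ of $\mathfrak B_d$, and let $S:=\iota(S_H)\cup T$ generate $\mathfrak B_d(H)$. Since $\iota$ is a group homomorphism carrying $S_H$ into $S$, it is $1$-Lipschitz with respect to the chosen word metrics, and $r\circ\iota=\id_H$ by direct inspection of the representative $\iota(h)=[O_2,(\id,(h,1,\ldots,1)),O_2]$.

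The content of the argument is to show that $r$ is coarsely Lipschitz; in fact I will show it is $1$-Lipschitz. Fix a distinguished end $e_0\in\Ends(\CC_d(O,Y))$ corresponding to the ``first'' index of the tuple $\mathbf h$. Unraveling the factorization $[Z,(f_Z,\mathbf h),f(Z)]=[Z,(f_Z,\id),f(Z)]\cdot[f(Z),(\id,\mathbf h),f(Z)]$ used in the proof of Proposition~\ref{prop-fg} identifies $r(f)=h_1$ with the label that $f$ acts like at the end $f^{-1}(e_0)$; equivalently, $r(f)=h(f,f^{-1}(e_0))$, where $h(f,\cdot)\colon\Ends(\CC_d(O,Y))\to H$ denotes the locally constant label function of $f$ defined away from a defining surface. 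Unpacking the definition of an $H$-block homeomorphism yields the composition identity
\[
h(fg,e)=h(f,g(e))\cdot h(g,e)
\]
for any composable pair $f,g\in\mathfrak B_d(H)$ and any end $e$ outside suitable defining surfaces. Applying this at $e=(fs)^{-1}(e_0)=s^{-1}f^{-1}(e_0)$ gives
\[
r(fs)=h\bigl(f,f^{-1}(e_0)\bigr)\cdot h\bigl(s,s^{-1}f^{-1}(e_0)\bigr)=r(f)\cdot h\bigl(s,s^{-1}f^{-1}(e_0)\bigr),
\]
so $r(f)^{-1}r(fs)=h(s,s^{-1}f^{-1}(e_0))$ depends only on $s$, not on $f$. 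For $s\in T$ this factor is trivial because elements of $\mathfrak B_d$ carry trivial labels; for $s=\iota(h_0)$ with $h_0\in S_H$ the function $h(\iota(h_0),\cdot)$ takes only the values $h_0$ and $1$, according to whether its argument lies in the leftmost component of $\CC_d(O,Y)\setminus O_2$. In either case the increment has length at most $1$ in $H$, and iterating over any word in $S$ representing $f^{-1}g$ yields $d_H(r(f),r(g))\le d_{\mathfrak B_d(H)}(f,g)$.

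The main subtlety, and what I expect to be the only real obstacle, is choosing the indexing of $\mathbf h$ compatibly with the decomposition above: indexing the labels by suited boundaries of the source $Z$ instead of of the image $f(Z)$ would instead give $r(fs)=h(f,s(e_0))\cdot h(s,e_0)$, a product comparing labels of $f$ at the two possibly distant ends $e_0$ and $s(e_0)$. Since a single generator of $\mathfrak B_d$ can carry $e_0$ to a component of $\CC_d(O,Y)\setminus O_2$ on which $f$ acts by an arbitrary element of $H$, that alternative indexing is not Lipschitz. Fixing the convention dictated by the factorization collapses this $f$-dependence of the increment, delivering the $1$-Lipschitz retraction; combined with the $1$-Lipschitz section $\iota$ and the identity $r\circ\iota=\id_H$, this exhibits $H$ as a quasi-retract of $\mathfrak B_d(H)$.
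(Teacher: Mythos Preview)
Your proof is correct and follows essentially the same approach as the paper's: both arguments verify that $r$ is $1$-Lipschitz by checking that right-multiplication by a generator $s\in T$ leaves $r$ unchanged, while right-multiplication by $s\in\iota(S_H)$ changes it by at most a single generator of $H$, and then conclude using $r\circ\iota=\id_H$. Your treatment is in fact more careful than the paper's, since you make explicit the cocycle identity $h(fg,e)=h(f,g(e))\,h(g,e)$ and correctly isolate the indexing convention (labels read at the end $f^{-1}(e_0)$ rather than at $e_0$) needed for the paper's asserted properties $r(gg')=r(g)$ and $r(g\iota(s))\in\{r(g),r(g)s\}$ to hold.
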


\begin{proof}
Fix  finite generating sets $S_H$ and $S_B$ for $H$ and $\mathfrak B_d$, respectively. By Proposition \ref{prop-fg}, $\iota(S_H)\cup S_B$ is a finite generating set of $\mathfrak B_d(H)$. We will show that the map $r:  \mathfrak B_d(H) \rightarrow H$ described above is coarsely Lipschitz with respect to the word metrics on $\mathfrak B_d(H)$ and $H$. To this end: 
\begin{enumerate}
    \item  $r( g\iota(s))\in \{r(g), r(g)s\}$ for all $s\in S_H$ and all $g\in \mathfrak B_d(H)$;
    \item  $r(gg') = r(g)$ for any $g'\in \mathfrak B_d$ and $g\in \mathfrak B_d(H)$.
\end{enumerate}
In particular, $r$ is nonexpanding and hence coarsely Lipschitz. Since $\iota$ is a group homomorphism, it must be coarsely Lipschitz as well. As $r\circ \iota = \id_H$, we conclude that $r$ is a quasi-retraction. 
\end{proof}

\begin{cor}[Part ``only if" of Theorem~\ref{thm:main}]
If $\mathfrak B_d(H)$ is of type $F_n$, so is $H$.
\end{cor}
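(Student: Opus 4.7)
The plan is to deduce this corollary from two main ingredients already available: Theorem~\ref{thm:qs-retract-bmap}, which asserts that $H$ is a quasi-retract of $\mathfrak B_d(H)$ provided $H$ is finitely generated, and Alonso's Theorem~\ref{thm-quasi-re-fin}, which propagates the $F_n$ property across quasi-retractions. The case $n=0$ is vacuous, so I will assume $n \geq 1$, in which case $\mathfrak B_d(H)$ being of type $F_n$ forces it to be finitely generated.

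The only gap is that Theorem~\ref{thm:qs-retract-bmap} requires $H$ itself to be finitely generated as a hypothesis, so the first task is to bootstrap finite generation of $H$ from that of $\mathfrak B_d(H)$. For this I would proceed as follows. By \cite{ABF+21}, the group $\mathfrak B_d = \mathfrak B_d(\{1\})$ is of type $F_\infty$, and in particular admits a finite generating set $S_B$. Taking any finite generating set $T$ of $\mathfrak B_d(H)$ and expressing each element of $T$ as a word in $\iota(H) \cup S_B$ (possible by Proposition~\ref{prop-fg}), only finitely many elements $h\in H$ appear in these finitely many words; let $S_H \subset H$ denote this finite subset. Then $\mathfrak B_d(H)$ is already generated by $\iota(S_H) \cup S_B$.

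To see that $S_H$ in fact generates $H$, I would invoke the set-theoretic retraction $r \colon \mathfrak B_d(H) \to H$ defined before Proposition~\ref{prop-fg}, together with the two key properties of $r$ used in the proof of Theorem~\ref{thm:qs-retract-bmap}, namely that $r(g \iota(s)) \in \{r(g), r(g) s\}$ for all $s \in S_H$ and that $r(g g') = r(g)$ for all $g' \in \mathfrak B_d$. A straightforward induction on word length shows that $r$ sends any word in $\iota(S_H) \cup S_B$ to a product of at most that many elements of $S_H$. Since $h = r(\iota(h))$ for every $h \in H$, this gives $H = \langle S_H \rangle$, so $H$ is finitely generated.

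With $H$ now known to be finitely generated, Theorem~\ref{thm:qs-retract-bmap} exhibits $H$ as a quasi-retract of $\mathfrak B_d(H)$, and Theorem~\ref{thm-quasi-re-fin} then transfers the $F_n$ property from $\mathfrak B_d(H)$ to $H$, completing the proof. The main (and really only) obstacle is the bootstrap in the first step; once finite generation of $H$ is in hand, the rest is a formal application of the cited results.
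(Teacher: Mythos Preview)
Your proposal is correct, and its overall architecture matches the paper's: first bootstrap finite generation of $H$ from that of $\mathfrak B_d(H)$, then invoke Theorem~\ref{thm:qs-retract-bmap} and Theorem~\ref{thm-quasi-re-fin}. The difference lies entirely in how the bootstrap step is carried out.

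The paper argues by contraposition: if $H$ were not finitely generated, one writes $H$ as a strictly increasing union $H_1 \lneq H_2 \lneq \cdots$, and then $\mathfrak B_d(H)$ is the strictly increasing union $\mathfrak B_d(H_1) \lneq \mathfrak B_d(H_2) \lneq \cdots$, forcing $\mathfrak B_d(H)$ to be non-finitely-generated. This is short and uses only the evident functoriality $H \mapsto \mathfrak B_d(H)$ together with the fact that $r$ maps $\mathfrak B_d(H_i)$ into $H_i$ (to guarantee strictness of the chain). Your route is instead constructive: you extract a candidate finite set $S_H \subset H$ from a finite generating set of $\mathfrak B_d(H)$ via Proposition~\ref{prop-fg}, and then use the properties $r(g\iota(s)) \in \{r(g), r(g)s\}$ and $r(gg') = r(g)$ for $g' \in \mathfrak B_d$ (which hold for all $s \in H$, not just for elements of some pre-chosen generating set) to show $S_H$ actually generates $H$. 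This is a bit longer but has the virtue of yielding an explicit generating set and of making transparent that the same mechanism driving the quasi-retraction already handles the $n=1$ case. One small point worth making explicit in your write-up: you should take $S_H$ symmetric (or note that the property for $\iota(s)^{-1} = \iota(s^{-1})$ gives $r(g)s^{-1}$), so that the induction on word length goes through for arbitrary group words.
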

\proof 

Let us first prove the corollary for $n=1$. If $H$ is not finitely generated, we can find an increasing  family   of  subgroups $H_1\lneq H_2\lneq H_n\lneq \cdots$ of~$H$ such that $\cup_{n=1}^{\infty}H_i =H$. But then we would have a family of subgroups $\mathfrak B_d(H_1)\lneq \mathfrak B_d(H_2)\lneq \mathfrak B_d(H_n)\lneq \cdots$ of $\mathfrak B_d(H)$, such that $\cup_{n=1}^{\infty}\mathfrak B_d(H_i) =\mathfrak B_d(H)$. This implies $\mathfrak B_d(H)$ is not finitely generated ,which is a contradiction. 

Suppose now $\mathfrak B_d(H)$ is of type $F_n$ for some $n\geq 2$. As above, $H$ is finitely generated, and the corollary now follows from Theorem~\ref{thm-quasi-re-fin} and Theorem~\ref{thm:qs-retract-bmap}. 

\qed

\printbibliography

@article{ABF+21,
    title={Generalized asymptotic mapping class groups and their finiteness properties},
    author={Aramayona, Javier and Bux, Kai-Uwe and Fleschig, Jonas and Petrosyan, Nansen and Wu, Xiaolei},
    year={2021},
    Note={Preprint. arxiv: 2110.05318},
    primaryClass={math.GR}
}

@article {AF21,
    AUTHOR = {Aramayona, Javier and Funar, Louis},
     TITLE = {Asymptotic mapping class groups of closed surfaces punctured
              along {C}antor sets},
   JOURNAL = {Mosc. Math. J.},
  FJOURNAL = {Moscow Mathematical Journal},
    VOLUME = {21},
      YEAR = {2021},
    NUMBER = {1},
     PAGES = {1--29},
      ISSN = {1609-3321},
   MRCLASS = {57K20 (20F65)},
  MRNUMBER = {4219034},
       DOI = {10.17323/1609-4514-2021-21-1-1-29},
       URL = {https://doi-org.proxy.lib.ohio-state.edu/10.17323/1609-4514-2021-21-1-1-29},
}

@incollection {AV20,
    AUTHOR = {Aramayona, Javier and Vlamis, Nicholas G.},
     TITLE = {Big mapping class groups: an overview},
 BOOKTITLE = {In the tradition of {T}hurston---geometry and topology},
     PAGES = {459--496},
 PUBLISHER = {Springer, Cham},
      YEAR = {2020},
   MRCLASS = {57K20},
  MRNUMBER = {4264585},
       DOI = {10.1007/978-3-030-55928-1\_12},
       URL = {https://doi-org.proxy.lib.ohio-state.edu/10.1007/978-3-030-55928-1_12},
}

@article {Al94,
    AUTHOR = {Alonso, Juan M.},
     TITLE = {Finiteness conditions on groups and quasi-isometries},
   JOURNAL = {J. Pure Appl. Algebra},
  FJOURNAL = {Journal of Pure and Applied Algebra},
    VOLUME = {95},
      YEAR = {1994},
    NUMBER = {2},
     PAGES = {121--129},
      ISSN = {0022-4049},
   MRCLASS = {20J99 (20F32)},
  MRNUMBER = {1293049},
MRREVIEWER = {Thomas Brady},
       DOI = {10.1016/0022-4049(94)90069-8},
       URL = {https://doi-org.proxy.lib.ohio-state.edu/10.1016/0022-4049(94)90069-8},
}

@article {Ne92,
    AUTHOR = {Neretin, Yu. A.},
     TITLE = {Combinatorial analogues of the group of diffeomorphisms of the
              circle},
   JOURNAL = {Izv. Ross. Akad. Nauk Ser. Mat.},
  FJOURNAL = {Izvestiya Rossiiskoi Akademii Nauk. Seriya Matematicheskaya},
    VOLUME = {56},
      YEAR = {1992},
    NUMBER = {5},
     PAGES = {1072--1085},
      ISSN = {1607-0046},
   MRCLASS = {22E65},
  MRNUMBER = {1209033},
       DOI = {10.1070/IM1993v041n02ABEH002264},
       URL = {https://doi.org/10.1070/IM1993v041n02ABEH002264},
}

@article {BB97,
    AUTHOR = {Bestvina, Mladen and Brady, Noel},
     TITLE = {Morse theory and finiteness properties of groups},
   JOURNAL = {Invent. Math.},
  FJOURNAL = {Inventiones Mathematicae},
    VOLUME = {129},
      YEAR = {1997},
    NUMBER = {3},
     PAGES = {445--470},
      ISSN = {0020-9910},
   MRCLASS = {20F36 (20J05 57M07)},
  MRNUMBER = {1465330},
MRREVIEWER = {John Meier},
       DOI = {10.1007/s002220050168},
       URL = {https://doi-org.proxy.lib.ohio-state.edu/10.1007/s002220050168}
}

@article {Bri07,
    AUTHOR = {Brin, Matthew G.},
     TITLE = {The algebra of strand splitting. {I}. {A} braided version of
              {T}hompson's group {$V$}},
   JOURNAL = {J. Group Theory},
  FJOURNAL = {Journal of Group Theory},
    VOLUME = {10},
      YEAR = {2007},
    NUMBER = {6},
     PAGES = {757--788},
      ISSN = {1433-5883},
   MRCLASS = {20F65 (20F05 20F36)},
  MRNUMBER = {2364825},
MRREVIEWER = {Zoran \v{S}uni\'{c}},
       DOI = {10.1515/JGT.2007.055},
       URL = {https://doi-org.proxy.lib.ohio-state.edu/10.1515/JGT.2007.055},
}

@inproceedings {Br87,
    AUTHOR = {Brown, Kenneth S.},
     TITLE = {Finiteness properties of groups},
 BOOKTITLE = {Proceedings of the {N}orthwestern conference on cohomology of
              groups ({E}vanston, {I}ll., 1985)},
   JOURNAL = {J. Pure Appl. Algebra},
  FJOURNAL = {Journal of Pure and Applied Algebra},
    VOLUME = {44},
      YEAR = {1987},
    NUMBER = {1-3},
     PAGES = {45--75},
      ISSN = {0022-4049},
   MRCLASS = {20J05 (11F75 20F05 22E40)},
  MRNUMBER = {885095},
MRREVIEWER = {Ralph Strebel},
       DOI = {10.1016/0022-4049(87)90015-6},
       URL = {https://doi-org.proxy.lib.ohio-state.edu/10.1016/0022-4049(87)90015-6},
}

@article {BG84,
    AUTHOR = {Brown, Kenneth S. and Geoghegan, Ross},
     TITLE = {An infinite-dimensional torsion-free {${\rm FP}_{\infty }$}
              group},
   JOURNAL = {Invent. Math.},
  FJOURNAL = {Inventiones Mathematicae},
    VOLUME = {77},
      YEAR = {1984},
    NUMBER = {2},
     PAGES = {367--381},
      ISSN = {0020-9910},
   MRCLASS = {20J05 (55P99)},
  MRNUMBER = {752825},
MRREVIEWER = {G. Peter Scott},
       DOI = {10.1007/BF01388451},
       URL = {https://doi-org.proxy.lib.ohio-state.edu/10.1007/BF01388451},
}

@article {BFM+16,
    AUTHOR = {Bux, Kai-Uwe and Fluch, Martin G. and Marschler, Marco and
              Witzel, Stefan and Zaremsky, Matthew C. B.},
     TITLE = {The braided {T}hompson's groups are of type {$\rm F_\infty$}},
      NOTE = {With an appendix by Zaremsky},
   JOURNAL = {J. Reine Angew. Math.},
  FJOURNAL = {Journal f\"{u}r die Reine und Angewandte Mathematik. [Crelle's
              Journal]},
    VOLUME = {718},
      YEAR = {2016},
     PAGES = {59--101},
      ISSN = {0075-4102},
   MRCLASS = {20F65 (20F36 20J05 57M07)},
  MRNUMBER = {3545879},
MRREVIEWER = {Ian J. Leary},
       DOI = {10.1515/crelle-2014-0030},
       URL = {https://doi-org.proxy.lib.ohio-state.edu/10.1515/crelle-2014-0030},
}

@article {Deh06,
    AUTHOR = {Dehornoy, Patrick},
     TITLE = {The group of parenthesized braids},
   JOURNAL = {Adv. Math.},
  FJOURNAL = {Advances in Mathematics},
    VOLUME = {205},
      YEAR = {2006},
    NUMBER = {2},
     PAGES = {354--409},
      ISSN = {0001-8708},
   MRCLASS = {20F36 (57M25 57S05)},
  MRNUMBER = {2258261},
MRREVIEWER = {Darren D. Long},
       DOI = {10.1016/j.aim.2005.07.012},
       URL = {https://doi-org.proxy.lib.ohio-state.edu/10.1016/j.aim.2005.07.012},
}

@book {FM12,
    AUTHOR = {Farb, Benson and Margalit, Dan},
     TITLE = {A primer on mapping class groups},
    SERIES = {Princeton Mathematical Series},
    VOLUME = {49},
 PUBLISHER = {Princeton University Press, Princeton, NJ},
      YEAR = {2012},
     PAGES = {xiv+472},
      ISBN = {978-0-691-14794-9},
   MRCLASS = {57M50 (20F36 20F65 57M07 57N05)},
  MRNUMBER = {2850125},
MRREVIEWER = {Stephen P. Humphries},
}

@article {Far03,
    AUTHOR = {Farley, Daniel S.},
     TITLE = {Finiteness and {$\rm CAT(0)$} properties of diagram groups},
   JOURNAL = {Topology},
  FJOURNAL = {Topology. An International Journal of Mathematics},
    VOLUME = {42},
      YEAR = {2003},
    NUMBER = {5},
     PAGES = {1065--1082},
      ISSN = {0040-9383},
   MRCLASS = {20F65 (20F06 20F67 57M07)},
  MRNUMBER = {1978047},
MRREVIEWER = {Hanspeter Fischer},
       DOI = {10.1016/S0040-9383(02)00029-0},
       URL = {https://doi-org.proxy.lib.ohio-state.edu/10.1016/S0040-9383(02)00029-0},
}

@article {FH15,
    AUTHOR = {Farley, Daniel S. and Hughes, Bruce},
     TITLE = {Finiteness properties of some groups of local similarities},
   JOURNAL = {Proc. Edinb. Math. Soc. (2)},
  FJOURNAL = {Proceedings of the Edinburgh Mathematical Society. Series II},
    VOLUME = {58},
      YEAR = {2015},
    NUMBER = {2},
     PAGES = {379--402},
      ISSN = {0013-0915},
   MRCLASS = {20F65},
  MRNUMBER = {3341445},
MRREVIEWER = {Ariadna Fossas},
       DOI = {10.1017/S001309151400011X},
       URL = {https://doi-org.proxy.lib.ohio-state.edu/10.1017/S001309151400011X},
}

@article {FMW+13,
    AUTHOR = {Fluch, Martin G. and Marschler, Marco and Witzel, Stefan and
              Zaremsky, Matthew C. B.},
     TITLE = {The {B}rin-{T}hompson groups {$sV$} are of type
              {$\text{F}_\infty$}},
   JOURNAL = {Pacific J. Math.},
  FJOURNAL = {Pacific Journal of Mathematics},
    VOLUME = {266},
      YEAR = {2013},
    NUMBER = {2},
     PAGES = {283--295},
      ISSN = {0030-8730},
   MRCLASS = {20F65 (05E18)},
  MRNUMBER = {3130623},
MRREVIEWER = {Shengkui Ye},
       DOI = {10.2140/pjm.2013.266.283},
       URL = {https://doi-org.proxy.lib.ohio-state.edu/10.2140/pjm.2013.266.283},
}

@article {FK04,
    AUTHOR = {Funar, L. and Kapoudjian, C.},
     TITLE = {On a universal mapping class group of genus zero},
   JOURNAL = {Geom. Funct. Anal.},
  FJOURNAL = {Geometric and Functional Analysis},
    VOLUME = {14},
      YEAR = {2004},
    NUMBER = {5},
     PAGES = {965--1012},
      ISSN = {1016-443X},
   MRCLASS = {57M50 (20F65 57N05)},
  MRNUMBER = {2105950},
       DOI = {10.1007/s00039-004-0480-9},
       URL = {https://doi-org.proxy.lib.ohio-state.edu/10.1007/s00039-004-0480-9},
}

@article {FK08,
    AUTHOR = {Funar, Louis and Kapoudjian, Christophe},
     TITLE = {The braided {P}tolemy-{T}hompson group is finitely presented},
   JOURNAL = {Geom. Topol.},
  FJOURNAL = {Geometry \& Topology},
    VOLUME = {12},
      YEAR = {2008},
    NUMBER = {1},
     PAGES = {475--530},
      ISSN = {1465-3060},
   MRCLASS = {20F36 (20F05 20F10 57M07 57M25)},
  MRNUMBER = {2390352},
MRREVIEWER = {Alexey Muranov},
       DOI = {10.2140/gt.2008.12.475},
       URL = {https://doi-org.proxy.lib.ohio-state.edu/10.2140/gt.2008.12.475},
}

@article {FK09,
    AUTHOR = {Funar, Louis and Kapoudjian, Christophe},
     TITLE = {An infinite genus mapping class group and stable cohomology},
   JOURNAL = {Comm. Math. Phys.},
  FJOURNAL = {Communications in Mathematical Physics},
    VOLUME = {287},
      YEAR = {2009},
    NUMBER = {3},
     PAGES = {784--804},
      ISSN = {0010-3616},
   MRCLASS = {57N05 (20F05 20F38 57R56)},
  MRNUMBER = {2486661},
MRREVIEWER = {Athanase Papadopoulos},
       DOI = {10.1007/s00220-009-0728-1},
       URL = {https://doi-org.proxy.lib.ohio-state.edu/10.1007/s00220-009-0728-1},
}

@article {FN18,
    AUTHOR = {Funar, Louis and Neretin, Yurii},
     TITLE = {Diffeomorphism groups of tame {C}antor sets and
              {T}hompson-like groups},
   JOURNAL = {Compos. Math.},
  FJOURNAL = {Compositio Mathematica},
    VOLUME = {154},
      YEAR = {2018},
    NUMBER = {5},
     PAGES = {1066--1110},
      ISSN = {0010-437X},
   MRCLASS = {57M50 (20F36 37C85 54H15 57S05)},
  MRNUMBER = {3798595},
MRREVIEWER = {Matthew C. B. Zaremsky},
       DOI = {10.1112/S0010437X18007066},
       URL = {https://doi-org.proxy.lib.ohio-state.edu/10.1112/S0010437X18007066},
}

@article{GLU20,
    title={Asymptotically rigid mapping class groups I: Finiteness properties of braided {T}hompson's and {H}oughton's groups},
    author={Genevois, Anthony and Lonjou, Anne and Urech, Christian},
    year={2020},
    journal={Geom. Top.},
    Note={To appear. arxiv:2010.07225}
    }

@book {Ge08,
    AUTHOR = {Geoghegan, Ross},
     TITLE = {Topological methods in group theory},
    SERIES = {Graduate Texts in Mathematics},
    VOLUME = {243},
 PUBLISHER = {Springer, New York},
      YEAR = {2008},
     PAGES = {xiv+473},
      ISBN = {978-0-387-74611-1},
   MRCLASS = {57M07 (20F65 20J05 55-02 55P57 57-02)},
  MRNUMBER = {2365352},
MRREVIEWER = {John G. Ratcliffe},
       DOI = {10.1007/978-0-387-74614-2},
       URL = {https://doi-org.proxy.lib.ohio-state.edu/10.1007/978-0-387-74614-2},
}

@article {Har71,
    AUTHOR = {Harvey, W. J.},
     TITLE = {On branch loci in {T}eichm\"{u}ller space},
   JOURNAL = {Trans. Amer. Math. Soc.},
  FJOURNAL = {Transactions of the American Mathematical Society},
    VOLUME = {153},
      YEAR = {1971},
     PAGES = {387--399},
      ISSN = {0002-9947},
   MRCLASS = {30A60 (58D15)},
  MRNUMBER = {297994},
MRREVIEWER = {L. Keen},
       DOI = {10.2307/1995564},
       URL = {https://doi-org.proxy.lib.ohio-state.edu/10.2307/1995564},
}

@article {HW10,
    AUTHOR = {Hatcher, Allen and Wahl, Nathalie},
     TITLE = {Stabilization for mapping class groups of 3-manifolds},
   JOURNAL = {Duke Math. J.},
  FJOURNAL = {Duke Mathematical Journal},
    VOLUME = {155},
      YEAR = {2010},
    NUMBER = {2},
     PAGES = {205--269},
      ISSN = {0012-7094},
   MRCLASS = {57M07 (20F28)},
  MRNUMBER = {2736166},
MRREVIEWER = {Mihalis A. Sykiotis},
       DOI = {10.1215/00127094-2010-055},
       URL = {https://doi-org.proxy.lib.ohio-state.edu/10.1215/00127094-2010-055},
}

@book {Hi74,
    AUTHOR = {Higman, Graham},
     TITLE = {Finitely presented infinite simple groups},
    SERIES = {Notes on Pure Mathematics, No. 8},
 PUBLISHER = {Australian National University, Department of Pure
              Mathematics, Department of Mathematics, I.A.S., Canberra},
      YEAR = {1974},
     PAGES = {vii+82},
   MRCLASS = {20F05 (20B25)},
  MRNUMBER = {0376874},
MRREVIEWER = {F. Levin},
}

@book{Ke23,
  title     = {Vorlesungen \"{u}ber Topologie. I},
  author    = {Ker\'{e}kj\'{a}rt\'{o}, B. v.},
  year      = {1923},
  publisher = {Spring},
  address   = {Berlin}
}

@article {Ri63,
    AUTHOR = {Richards, Ian},
     TITLE = {On the classification of noncompact surfaces},
   JOURNAL = {Trans. Amer. Math. Soc.},
  FJOURNAL = {Transactions of the American Mathematical Society},
    VOLUME = {106},
      YEAR = {1963},
     PAGES = {259--269},
      ISSN = {0002-9947},
   MRCLASS = {54.75},
  MRNUMBER = {143186},
MRREVIEWER = {S. S. Cairns},
       DOI = {10.2307/1993768},
       URL = {https://doi-org.proxy.lib.ohio-state.edu/10.2307/1993768},
}

@article {Qui78,
    AUTHOR = {Quillen, Daniel},
     TITLE = {Homotopy properties of the poset of nontrivial {$p$}-subgroups
              of a group},
   JOURNAL = {Adv. in Math.},
  FJOURNAL = {Advances in Mathematics},
    VOLUME = {28},
      YEAR = {1978},
    NUMBER = {2},
     PAGES = {101--128},
      ISSN = {0001-8708},
   MRCLASS = {20J99},
  MRNUMBER = {493916},
MRREVIEWER = {Kenneth S. Brown},
       DOI = {10.1016/0001-8708(78)90058-0},
       URL = {https://doi-org.proxy.lib.ohio-state.edu/10.1016/0001-8708(78)90058-0},
}

@article {Ste92,
    AUTHOR = {Stein, Melanie},
     TITLE = {Groups of piecewise linear homeomorphisms},
   JOURNAL = {Trans. Amer. Math. Soc.},
  FJOURNAL = {Transactions of the American Mathematical Society},
    VOLUME = {332},
      YEAR = {1992},
    NUMBER = {2},
     PAGES = {477--514},
      ISSN = {0002-9947},
   MRCLASS = {20F32 (20E32 20J05 55P20 57M05 57Q99)},
  MRNUMBER = {1094555},
MRREVIEWER = {Ross Geoghegan},
       DOI = {10.2307/2154179},
       URL = {https://doi-org.proxy.lib.ohio-state.edu/10.2307/2154179},
}

@article{SW21a,
    title={Finiteness properties for relatives of braided {H}igman--{T}hompson groups},
    author={Skipper, Rachel and Wu, Xiaolei},
    year={2021},
    Note={Preprint. arxiv:2103.14589v2},
    primaryClass={math.GR}
}

@article{SW21b,
    title={Homological stability for the ribbon {H}igman--{T}hompson groups},
    author={Skipper, Rachel and Wu, Xiaolei},
    year={2021},
    Note={Preprint. arxiv:2106.08751},
    primaryClass={math.GR}
}

@article {Za17,
    AUTHOR = {Zaremsky, Matthew C. B.},
     TITLE = {Separation in the {BNSR}-invariants of the pure braid groups},
   JOURNAL = {Publ. Mat.},
  FJOURNAL = {Publicacions Matem\`atiques},
    VOLUME = {61},
      YEAR = {2017},
    NUMBER = {2},
     PAGES = {337--362},
      ISSN = {0214-1493},
   MRCLASS = {20F65 (20F36 57M07)},
  MRNUMBER = {3677865},
MRREVIEWER = {Ioannis Diamantis},
       DOI = {10.5565/PUBLMAT6121702},
       URL = {https://doi.org/10.5565/PUBLMAT6121702},
}

\end{document}